\documentclass[11pt,reqno]{amsart}
\usepackage{amsthm,amssymb,amsmath}
\usepackage{textcomp}
\usepackage[foot]{amsaddr}
\usepackage{xcolor}
\usepackage{hyperref}
\usepackage[T1]{fontenc}
\usepackage{lmodern}
\usepackage{lmodern}
\hypersetup{
	colorlinks =true , 
	linkcolor =blue,
	urlcolor = magenta,
	citecolor =blue
}

\textwidth 162mm
\oddsidemargin 2mm
\evensidemargin 2mm
\textheight 224mm
\topmargin -3mm

\newtheorem{theorem}{Theorem}[section]
\newtheorem*{theorem*}{Theorem}
\newtheorem{lemma}{Lemma}[section]
\newtheorem{corollary}{Corollary}[section]

\theoremstyle{definition}

\newtheorem{definition}{\bf Definition}[section]
\newtheorem*{definition*}{\bf Definition}

\newtheorem{remark}{Remark}
\newtheorem*{remark*}{Remark}

\newtheorem*{example*}{\bf Example}

\title[ Linking theorem with application to a system of coupled Poisson equations ]{A new infinite-dimensional Linking theorem with application to a system of coupled Poisson equations}

\begin{document}

    \author{Ablanvi Songo}

\address{Universit\'{e} de Sherbrooke, D\'{e}partement de math\'{e}matiques, Sherbrooke, Qu\'{e}bec, Canada}

\email{ablanvi.songo@usherbrooke.ca}

\author{ Fabrice Colin }

\address{Laurentian University, School of Engineering and Computer Science, Sudbury, Ontario, Canada}

\email{fcolin@laurentian.ca}

\keywords{Generalized linking theorem, coupled Poisson equations, strongly indefinite functionals, $\tau-$topology}

\subjclass[2020]{35J05; 35J61; 35J91; 58E05}

    \begin{abstract}
	Using the minimax technique from the critical point theory, which consists in constructing or transforming a suitable class of applications such that a critical value $c$ of a functional $f$ can be characterized as a minimax value over this class, we establish a new natural infinite-dimensional linking theorem for strongly indefinite functionals by using the $\tau-$topology of Kryszewski and Szulkin. Our result is a generalization of the classical linking theorem \cite[Theorem 2.21]{Wi}. As an application, we obtain the existence of a nontrivial solution to a system of coupled Poisson equations.	
\end{abstract}
\maketitle
\section{Introduction}
In 1978, Paul H. Rabinowitz established the following linking theorem (the reader can find this version in the book of Willem \cite[Theorem 2.21]{Wi}). Let $X=Y\oplus Z$ be a Banach space with $\dim Y <\infty$. Let $\rho>r>0$ and $z_0\in Z$ be such that $\|z_0\|=r$. Define 
\begin{eqnarray*}
				\label{eq 7}
				M &:=& \Big\{ u=y+\lambda_0 z_0\;|\; \| u\| \le \rho, \;\lambda_0 \ge 0,\; y \in Y\Big\}, \\
				\label{eq 8}
				\partial M &:=& \Big\{ u= y + \lambda_0 z_0 \;|\; y \in Y,\; (\|u\| =\rho \; \text{and}\; \lambda_0 \ge 0 )\; \text{or}\; (\|u\| \le \rho \; \text{and}\;  \lambda_0 =0 ) \Big\},\\
				\label{eq 9}
				N &:=& \Big\{ u \in Z \;|\; \|u\| =r \Big\}.
			\end{eqnarray*}
			Let $J \in \mathcal{C}^1 (X, \mathbb{R})$ be such that
			\begin{center}
				$b:= \underset{u\in N}{\inf} \; J(u) > a := \underset{u \in \partial M}{\max}\; J(u) $. 
			\end{center}
			If $J$ satisfies the $(PS)_c$ condition (see Definition $\ref{def 2.3}$) with
			\begin{equation*}
				c :=\underset{\gamma \in \Gamma}{\inf} \; \underset{u\in M}{\max}\; J(\gamma(u)),
                \end{equation*}
                \begin{equation*}
                    \Gamma := \Big\{ \gamma\in \mathcal{C}(M, X)\;|\; \gamma\Big|_{\partial M} =\operatorname{id}\Big\},
                \end{equation*}
then $c$ is a critical value of $J$.\\

In order to apply the above theorem to find a solution to variational problems (based on PDE), it is crucial that the dimension of the space $Y$ must be finite in the decomposition of the space $X$ on which the associated functional (with the problem) is defined.\\

Suppose that, one is interested in finding a nontrivial solution to the following systems of two semilinear coupled Poisson equations 
		\begin{equation}
			(\mathcal{P})
			\begin{cases}
				-\Delta u = \lambda u + g(x,v),\; x \in \Omega,\\
				-\Delta v = \delta v + f(x,u), \;x \in \Omega, \\
				u,v \in H^{1}_{0}(\Omega), 
			\end{cases}
		\end{equation}
		where $\Omega$ is an open bounded subset of $\mathbb{R}^N$, $  f(x,t)$ and $ g(x,t)$ are nonlinear continuous functions in $\bar{\Omega}\times \mathbb{R}$, $\lambda$ and $\delta$ are real numbers and $\Delta$ is the Laplace operator. The above theorem cannot be applied. Indeed, since the nonlinear functions $f(x,t)$ and $g(x,t)$ have a growth in $t$, the problem $(\mathcal{P})$ has a variational structure. In fact, the problem $(\mathcal{P})$ constitutes the Euler-Lagrange equation for the functional
		\begin{equation}
			J(u,v) := \int_{\Omega} \Bigg(\nabla u .\nabla v - \dfrac{\lambda u^2}{2}- \dfrac{\delta v^2}{2}-F(x,u)-G(x,v) \Bigg)\; dx,
		\end{equation}
		where 
		\begin{equation*}
			F(x,u) := \int_{0}^{u(x)} f(x,t)dt, \qquad G(x,v) := \int_{0}^{v(x)} g(x,t)dt.
		\end{equation*}
		Moreover, the first part of the functional $J$ is a strongly indefinite operator. In particular, $J$ has the form 
		\begin{equation*}
			\dfrac{\|Q(u,v)\|^2}{2} -\dfrac{\|P(u,v)\|^2}{2} -\Phi(u,v),
		\end{equation*}
		where $(u,v)$ belongs to a Hilbert space $X= Y\oplus Z$ and where $Y$ and  $Z$ are both infinite-dimensional subspaces and where $P$ and $Q$ are respectively the orthogonal projections on $Y$ and $Z$. The problem $(\mathcal{P})$ is then strongly indefinite, because the energy functional associated with problem $(\mathcal{P})$ has a strong indefinite quadratic part. There is no more mountain pass structure but a linking one. Therefore, the proofs of the main results related to the functional $J$ cannot rely on classical min-max results. Namely, the classical linking theorem could not be used to find a nontrivial solution to the problem $(\mathcal{P})$. \\
        
         We would like to point out that there are various infinite-dimensional generalizations of the classical linking theorem in the literature that can help find a nontrivial solution to the problem $(\mathcal{P})$. Among others, we can mention \cite{KS, BB, CW}.\\
         
        In this paper, we propose a new way to generalize the classical linking theorem. To do so, we consider on the Hilbert space $X$ the $\tau-$topology introduced by Kryszewski and Szulkin \cite{KS}. We replace the set $\Gamma$ in the above theorem by the following set\\
         \begin{center}
		$\Gamma : = \Big\{\gamma : M \rightarrow X \;\Big|\;\gamma \quad \text{is}\quad \tau-$continuous, $\gamma \big|_{ \partial M} =\operatorname{id}$ and\\ every  $u\in \operatorname{int}(M)$ has a $\tau-$neighborhood $N_u$ in $X$ such that $(\operatorname{id}-\gamma)(N_u\cap \operatorname{int}(M))$ \\ is contained in a finite-dimensional subspace of $X \Big \}$.
			\end{center}
            
In addition, we suppose that $J$ is $C^1-$functional such that $J$ is $\tau-$upper semi-continuous, $\nabla J$ is weakly sequentially continuous, $J$ satisfies the $(PS)_c$ condition and $J$ fulfills the linking geometry
\begin{equation*}
				\underset{u\in N}{\inf} \; J(u) >\underset{u \in \partial M}{\sup}\; J(u) . 
			\end{equation*}
Then, we show that 	\begin{equation*}
				c :=\underset{\gamma \in \Gamma}{\inf} \; \underset{u\in M}{\sup}\; J(\gamma(u)),
                \end{equation*}
                is a critical value of the functional $J$; making our infinite-dimensional result a natural way to generalize the classical linking theorem (see Theorem $\ref{theorem 3.1}$).	\\
The proof of our abstract result is based on an infinite-dimensional general minimax result proved by the two authors in \cite{CS} (see \cite[Theorem 2.1]{CS}).\\
        Our result can be applied to a wider class of indefinite functionals,
 especially to strongly indefinite functionals; that is, functionals of the form
 \begin{equation*}
     J(u) = \dfrac{1}{2} \langle Lu, u \rangle - \Psi(u)
 \end{equation*}
defined on a Hilbert space $X$, where $L : X \to X$ is a self-adjoint operator with negative and positive eigenspaces that are infinite-dimensional.\\
To test the value of our result, we will apply it to find a nontrivial solution to the problem $(\mathcal{P})$.\\

	There are many papers in the literature that are devoted to the study of the existence of solutions to these problems, either in a whole space or in bounded (also unbounded) domains; see for example \cite{FDR, CF, ZL, FY, LY}.\\

The remaining of the paper is organized as follows. In the next Section, we recall some classical results and present some other preliminary results which will be used in the following. In Section 3, we state and prove our main result and in Section 4, we apply our main result to obtain a nontrivial solution to the problem $(\mathcal{P})$.

\section{Kryszewski-Szulkin degree theory}
In this section, we are following the presentation of the degree theory of Kryszewski and Szulkin given in \cite{Wi} by Michel Willem.

Let $Y$ be a real separable Hilbert space endowed with inner product $( \cdot, \cdot )$ and the associated norm $\|\cdot\|$.  

On $Y $ we consider the $\sigma-$topology introduced by Kryszewski and Szulkin; that is, the topology generated by the norm 
\begin{equation}
	|u|_\sigma := \sum_{k =0}^{\infty} \frac{1}{2^ {k+1}}|(u, e_k)|, \quad u \in Y,
\end{equation}
where $(e_k)_{k\ge 0}$ is a total orthonormal sequence in $Y$.\\

\begin{remark}	
	\label{remark 1}
	By the Cauchy-Schwarz inequality, one can show that $|u|_\sigma \le \|u\|$ for every $u \in Y$. Moreover, if $(u_n)$ is a bounded sequence in $Y$ then 
	\begin{equation*}
		u_n \rightharpoonup u \Longleftrightarrow u_n \overset{\sigma}{\rightarrow} u,
	\end{equation*}
	where $\rightharpoonup$ denotes the weak convergence and $ \overset{\sigma}{\rightarrow}$ denotes the convergence in the $\sigma-$topology.
\end{remark}
Let $U$ be an open bounded subset of $Y$ such that its closure $\bar{U}$ is $\sigma-$closed.
\begin{definition}[\cite{Wi}]
	\label{definition 2.1}
	A map $f : \bar{U} \rightarrow Y$ is $\sigma-$admissible (admissible for short) if\\
	(1) $f$ is $\sigma-$continuous, \\
	(2) each point $u \in U$ has a $\sigma-$neighborhood $N_u$ in $Y$ such that $(id-f)(N_u\cap U)$ is contained in a finite-dimensional subspace of $Y$.
\end{definition}
\begin{definition}[\cite{Wi}]
	A map $h :[0,1]\times \bar{U}\rightarrow Y$ is an admissible homotopy if \\
	(1)  $0 \notin h([0,1] \times \partial U)$,\\
	(2) $h$ is $\sigma-$continuous, that is $t_n \rightarrow t$ and $u_n \overset{\sigma}{\rightarrow} u$ implies $h(t_n,u_n) \overset{\sigma}{\rightarrow} h(t,u)$,\\
	(3) $h$ is $\sigma-$locally finite-dimensional. That is, for any $(t,u) \in[0,1]\times U$ there is a neighborhood $N_{(t,u)}$ in the product topology of $[0,1]$ and $(X, \sigma)$ such that $\{ v-h(s,v) \; | \; (s,v) \in N_{(t,u)} \cap ([0,1]\times U)\}$ is contained in a finite-dimensional subspace of $Y$.
\end{definition} Then, for an admissible map $f$ such that $0 \notin f(\partial U)$   we have (see \cite[ Theorem 6.6]{Wi})
\begin{equation*}
	deg (h(0,.), U) = deg(h(1,.),U),
\end{equation*}
where $deg$ is the topological degree of $f$ (about 0). Such a degree possesses the usual properties; in particular, if $f : \bar{U} \rightarrow Y$ is admissible with $0 \notin f(\partial U)$ and $deg(f, U)\ne 0$, then there exists $u \in U$ such that $f(u)= 0$.\\

Now, let $X = Y\oplus Z$, where $Y$ is closed and $Z=Y^\perp$, be a real separable Hilbert space endowed with inner product $\langle \cdot, \cdot \rangle$ and the associated norm $\|\cdot\|$. Let $(e_k)_{k\ge 0}$ be an orthogonal basis of $Y$. On $X$, we define a new norm by setting
\begin{equation*}
	|u|_\tau := \max \Bigg(\sum_{k =0}^{\infty} \frac{1}{2^ {k+1}}|\langle Pu, e_k\rangle|, \|Qu\| \Bigg), \quad u \in X,
\end{equation*}
where $P$ and $Q$ are respectively orthogonal projections of $X$ into $Y$ and $Z$ and we denote by $\tau$  the topology generated by this norm. The topology $\tau$ was introduced by Kryszewski and Szulkin \cite{KS}.
\begin{remark}	
	\label{remark 2} For every $u\in X$, 
	we have $\|Qu\| \le |u|_\tau$ and $|Pu|_\sigma \le |u|_\tau$. Moreover, if $(u_n)$ is a bounded sequence in $X$ then 
	\begin{equation}
    \label{eq 4}
	u_n \overset{\tau}{\rightarrow} u \Longleftrightarrow	Pu_n \rightharpoonup Pu \quad\text{and}\quad Qu_n \to Qu,
	\end{equation}
	where $\rightharpoonup$ denotes the weak convergence and $ \overset{\tau}{\rightarrow}$ denotes the convergence in the $\tau-$topology.
\end{remark}

A functional $J: X \to \mathbb{R}$ is said to be $\tau-$upper semi-continuous if the set $$J_{\beta}:= \Big\{u \in X \;|\; J(u)\ge \beta \Big\}$$ is $\tau-$closed and 
we say that $\nabla J$ is weakly sequentially continuous if the sequence $(\nabla J(u_n))$ converges weakly to $\nabla J(u)$ whenever $(u_n)$ converges weakly to $u$ in $X$.\\

We consider the class of $\mathcal{C}^1-$functionals $J : X \rightarrow \mathbb{R}$ such that 

\text{(A)}	$J$ is $\tau-$upper semi-continuous and $\nabla J$ is weakly sequentially continuous.\\

\begin{definition}
\label{def 2.3}
A $\mathcal{C}^1-$functional $J$ is said to satisfy the $(PS)_c$ condition (or the Palais-Smale condition at level $c$) if any sequence $(u_n)\subset X$ such that 
\begin{equation*}
	J(u_n) \rightarrow c \quad \textit{and} \quad J' (u_n) \rightarrow 0,
\end{equation*}
has a convergent subsequence.
\end{definition}

We recall the following two results from \cite{CS} that will play a key role in the proof of our abstract result; see \cite[Theorem 2.1 ]{CS} and \cite[Corollary 3.1]{CS}.

\begin{theorem}[General minimax principle]
	\label{theorem 2.1}
Assume that $J$ satisfies $(A)$, that is $J$ is $\tau-$upper semi-continuous and $\nabla J$ is weakly sequentially continuous. Let $M$ be a closed metric subset of $X$, and let $M_0$ be a closed subset of $M$. 
Let us define 
 \begin{center}
 	$\Lambda_0 : = \Big\{\gamma_0 : M_0 \rightarrow X \;\Big|\;\gamma_0 $\quad \text{is}\quad $\tau-$continuous \Big \},
 \end{center}
\begin{center}
		$\Gamma : = \Big\{\gamma : M \rightarrow X \;\Big|\;\gamma \quad \text{is}\quad \tau-$continuous, $\gamma \big|_{ M_0} \in \Lambda_0$ and\\ every  $u\in \operatorname{int}(M)$ has a $\tau-$neighborhood $N_u$ in $X$ such that $(\operatorname{id}-\gamma)(N_u\cap \operatorname{int}(M))$ \\ is contained in a finite-dimensional subspace of $X \Big \}$.
	\end{center}
	If $J$ satisfies
	\begin{equation}
		\label{eq 7}
		\infty > c := \underset{\gamma \in \Gamma}{\inf}\; \underset{u \in M}{\sup} \;J(\gamma(u))> a := \underset{\gamma_0 \in \Lambda_0}{\sup}\; \underset{u \in M_0}{\sup} \; J(\gamma_0(u)),
	\end{equation}
	Then,
	for every $\varepsilon \in ] 0, \frac{c-a}{2}[$, $\delta >0$ and $\gamma \in \Gamma$ such that
	\begin{equation}
		\label{eq 8}
		\underset{u\in M}{\sup}\; J \circ \gamma (u) \le c +\varepsilon , 
	\end{equation}
	there exists $u \in X$ such that 
    \begin{enumerate}
        \item[$(a)$] \[c-2\varepsilon \le J(u) \le c+2\varepsilon ,\]
        \item[$(b)$] \[dist(u, \gamma(M)) \le 2 \delta,\]
        \item[$(c)$] \[\|J' (u)\| < \frac{8\varepsilon}{\delta}.\]
    \end{enumerate}
\end{theorem}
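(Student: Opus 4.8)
The plan is to argue by contradiction, constructing a $\tau$-admissible descending deformation of $\gamma$ and contradicting the minimax characterisation of $c$. Fix $\epsilon\in\left(0,\frac{c-a}{2}\right)$, $\delta>0$ and $\gamma\in\Gamma$ with $\sup_{M}J\circ\gamma\le c+\epsilon$, and suppose that no $u$ satisfying (a)--(c) exists. Negating (c) on the set where (a) and (b) hold, this means
\[
\|J'(u)\|\ge\frac{8\epsilon}{\delta}\qquad\text{for every }u\in S:=\{u\in X:\ c-2\epsilon\le J(u)\le c+2\epsilon\ \text{ and }\ \dist(u,\gamma(M))\le 2\delta\}.
\]
The goal is to produce $\beta\in\Gamma$ with $\sup_{M}J\circ\beta<c$, which is impossible since $c=\inf_{\Gamma}\sup_{M}J\circ(\cdot)$.

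\textbf{Step 1 (a $\tau$-admissible deformation).} First I would establish a quantitative deformation lemma in the $\tau$-topology, in the spirit of the norm-topology lemma but compatible with the Kryszewski--Szulkin framework. Since the norm $|\cdot|_{\tau}$ metrises $\tau$, the space $(X,\tau)$ is a paracompact metric space and carries $\tau$-locally finite partitions of unity. At a point $u_0$ with $J'(u_0)\ne 0$ I choose a vector $w_0$ in a finite-dimensional subspace of $X$ with $\langle J'(u_0),w_0\rangle$ large relative to $\|w_0\|$; by the weak sequential continuity of $\nabla J$ (assumption (A)) the map $u\mapsto\langle\nabla J(u),w_0\rangle$ is $\tau$-continuous, so this estimate persists on a $\tau$-neighborhood of $u_0$. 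Gluing such local vectors by a $\tau$-locally finite partition of unity produces a $\tau$-continuous, locally finite-dimensional pseudo-gradient field $V$: each point has a $\tau$-neighborhood on which $V$ takes values in a fixed finite-dimensional subspace. After truncating $V$ by a cut-off that vanishes off a small neighborhood of $S$ and suitably rescaling, its flow $\eta:[0,1]\times X\to X$ satisfies $\eta(0,\cdot)=\mathrm{id}$; $t\mapsto J(\eta(t,u))$ is non-increasing; $\|\eta(t,u)-u\|\le\delta$; $\eta(t,u)=u$ whenever $J(u)\notin[c-2\epsilon,c+2\epsilon]$ or $\dist(u,\gamma(M))>2\delta$; and, thanks to the gradient bound $\|J'\|\ge 8\epsilon/\delta$ on $S$, the descent $J(\eta(1,w))\le c-\epsilon$ for every $w\in\gamma(M)$ with $J(w)\le c+\epsilon$. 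Crucially, $\eta$ is a $\tau$-continuous, $\tau$-locally finite-dimensional homotopy in the sense of the admissible-homotopy definition of Section 2: along each ($\tau$-compact) trajectory, $w-\eta(t,w)=\int_0^t(\text{multiple of }V)\,ds$ stays in the finite sum of the finitely many finite-dimensional subspaces covering the trajectory, uniformly for $w$ in a $\tau$-neighborhood.

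\textbf{Step 2 (the deformed map remains in $\Gamma$).} Set $\beta:=\eta(1,\cdot)\circ\gamma$. I would check the three defining properties of $\Gamma$. The map $\beta$ is $\tau$-continuous as a composition of $\tau$-continuous maps. On $M_0$ one has $J\circ\gamma\le a<c-2\epsilon$ since $\gamma|_{M_0}\in\Lambda_0$ and $\epsilon<\frac{c-a}{2}$; hence $\gamma(M_0)$ avoids $S$, so $\eta(1,\cdot)$ is the identity there and $\beta|_{M_0}=\gamma|_{M_0}\in\Lambda_0$. For the finite-dimensionality condition, write for $u\in M$
\[
(\mathrm{id}-\beta)(u)=(\mathrm{id}-\gamma)(u)+\bigl(\mathrm{id}-\eta(1,\cdot)\bigr)(\gamma(u)).
\]
The first summand is locally finite-dimensional on $\mathrm{int}(M)$ because $\gamma\in\Gamma$, and the second is locally finite-dimensional by Step 1 pulled back through the $\tau$-continuous $\gamma$; intersecting the two $\tau$-neighborhoods and summing the two subspaces, every $u\in\mathrm{int}(M)$ gets a $\tau$-neighborhood on which $(\mathrm{id}-\beta)$ lands in a finite-dimensional subspace. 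Thus $\beta\in\Gamma$.

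\textbf{Step 3 (contradiction).} For $u\in M$ the point $\gamma(u)$ lies in $\gamma(M)$ and satisfies $J(\gamma(u))\le c+\epsilon$, so the descent estimate of Step 1 gives $J(\beta(u))=J(\eta(1,\gamma(u)))\le c-\epsilon$. Hence $\sup_{M}J\circ\beta\le c-\epsilon<c$, contradicting $c\le\sup_{M}J\circ\beta$ coming from $\beta\in\Gamma$. Therefore a $u$ with (a)--(c) must exist.

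The main obstacle is Step 1: a plain norm pseudo-gradient field does not yield a map back in $\Gamma$, so the entire difficulty is to manufacture a descending flow that is simultaneously $\tau$-continuous and $\tau$-locally finite-dimensional with the sharp constants $2\epsilon$, $2\delta$, $8\epsilon/\delta$. The two properties of (A) are precisely what make this possible --- weak sequential continuity of $\nabla J$ lets one freeze a good descent direction inside a finite-dimensional subspace on each $\tau$-neighborhood, while $\tau$-upper semicontinuity controls the sublevel geometry --- and once this $\tau$-admissible deformation is in hand, Steps 2--3 are routine.
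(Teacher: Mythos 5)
First, a point of comparison: this paper does not actually prove Theorem \ref{theorem 2.1} --- it is imported verbatim from the companion paper \cite[Theorem 2.1]{CS}, so there is no internal proof to measure you against. Your overall strategy (negate the conclusion, build a $\tau$-admissible descending deformation $\eta$, set $\beta=\eta(1,\cdot)\circ\gamma$, and contradict the minimax characterization of $c$) is the canonical route for this kind of statement --- it is the Kryszewski--Szulkin analogue of the proof of the general minimax principle in \cite{Wi} --- and your Steps 2 and 3 are correct as far as they go: the decomposition $(\mathrm{id}-\beta)=(\mathrm{id}-\gamma)+(\mathrm{id}-\eta(1,\cdot))\circ\gamma$, the observation that $J\circ\gamma\le a<c-2\epsilon$ on $M_0$ so that the flow fixes $\gamma(M_0)$, and the final contradiction are all routine once Step 1 is available.

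The genuine gap is Step 1, which is the entire content of the theorem and is asserted rather than proved; worse, the one concrete claim you offer in its support is false as stated. You claim that weak sequential continuity of $\nabla J$ makes $u\mapsto\langle\nabla J(u),w_0\rangle$ $\tau$-continuous, so that the descent estimate ``persists on a $\tau$-neighborhood.'' But $\tau$-convergence implies weak convergence only for \emph{bounded} sequences (this is exactly Remark \ref{remark 2} and (\ref{eq 4})): for instance $u_n=ne_n$ satisfies $|u_n|_\tau=n2^{-n-1}\to 0$ while $u_n\not\rightharpoonup 0$ (pair against $v=\sum_{k\ge1}k^{-1}e_k$). Hence $\langle\nabla J(\cdot),w_0\rangle$ is $\tau$-continuous only on norm-bounded sets, and every $\tau$-ball is norm-unbounded, so your local estimates do not persist on $\tau$-neighborhoods and the partition-of-unity gluing is not justified. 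This cannot be waved away by compactness either: in the abstract setting of Theorem \ref{theorem 2.1}, $M$ is an arbitrary closed subset of $X$, and even when $M$ is $\tau$-compact the image $\gamma(M)$ of a $\tau$-continuous map can be norm-unbounded (the set $\{ne_n\,|\,n\ge1\}\cup\{0\}$ is $\tau$-compact and unbounded). A correct proof must confront this --- for example by carrying out the construction only on norm-bounded pieces, or by exploiting structure that forces boundedness of $\tau$-convergent sequences on the relevant upper level sets, which is how the Kryszewski--Szulkin argument works for functionals of the form $\frac12\|Qu\|^2-\frac12\|Pu\|^2-\psi(u)$ but which does not follow from hypothesis (A) alone. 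Relatedly, the $\tau$-upper semicontinuity of $J$, which is what makes the sets $\{J\ge c-2\epsilon\}$ $\tau$-closed and the cut-off/exit-time arguments produce a jointly $\tau$-continuous, $\tau$-locally finite-dimensional homotopy, is invoked in your sketch only as a slogan and never used. As written, the proposal reduces the theorem to an unproved deformation lemma whose stated justification contains a real error.
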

\begin{corollary}
	\label{cor 1.1}
	Suppose that the assumptions of Theorem $\ref{theorem 2.1}$ are satisfied and suppose that $J$ satisfies $(\ref{eq 7})$. Then there exists a sequence $(u_n)\subset X$ satisfying 
	\begin{equation*}
		J(u_n) \rightarrow c, \qquad J'(u_n) \rightarrow 0.
	\end{equation*}
	In particular, if $J$ satisfies $(PS)_c$ condition, then $c$ is a critical value $J$.
\end{corollary}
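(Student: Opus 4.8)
The plan is to use Theorem~\ref{theorem 2.1} as a quantitative tool to manufacture a Palais--Smale sequence at the level $c$, and then to convert this sequence into a critical point by invoking the $(PS)_c$ condition together with the $\mathcal{C}^1$ regularity of $J$.

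First I would fix a sequence $(\epsilon_n)$ with $\epsilon_n \searrow 0$ and $0 < \epsilon_n < \frac{c-a}{2}$ for every $n$; this is possible precisely because hypothesis~\eqref{eq 7} guarantees $a < c < \infty$. Since $c = \inf_{\gamma\in\Gamma}\sup_{u\in M} J(\gamma(u))$ is a finite infimum, the very definition of the infimum provides, for each $n$, a map $\gamma_n \in \Gamma$ with $\sup_M J\circ\gamma_n \le c + \epsilon_n$, which is exactly the admissibility requirement~\eqref{eq 8} needed to invoke the theorem.

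Next, for each $n$ I would apply Theorem~\ref{theorem 2.1} with the data $\epsilon = \epsilon_n$, $\gamma = \gamma_n$, and the fixed value $\delta = 1$. This yields a point $u_n \in X$ satisfying $c - 2\epsilon_n \le J(u_n) \le c + 2\epsilon_n$ and $\|J'(u_n)\| < 8\epsilon_n$. Letting $n\to\infty$ and using $\epsilon_n \to 0$ immediately gives $J(u_n) \to c$ and $J'(u_n) \to 0$, so $(u_n)$ is the Palais--Smale sequence claimed in the first assertion.

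Finally, under the $(PS)_c$ condition this sequence admits a subsequence $u_{n_k}$ converging in norm to some $u \in X$. Because $J \in \mathcal{C}^1(X,\mathbb{R})$, both $J$ and $J'$ are norm-continuous, so passing to the limit in $J(u_{n_k})\to c$ and $J'(u_{n_k})\to 0$ yields $J(u) = c$ and $J'(u) = 0$; thus $c$ is a critical value of $J$. I do not expect any genuine obstacle here, since Theorem~\ref{theorem 2.1} already carries the full deformation/Ekeland-type machinery. The only point deserving attention is the joint choice of $\epsilon_n$ and $\delta$ so that conclusion (c) forces $\|J'(u_n)\| \to 0$ while conclusion (a) forces $J(u_n)\to c$; the choice $\delta \equiv 1$ with $\epsilon_n \to 0$ secures both at once, and conclusion (b) on the distance to $\gamma_n(M)$ plays no role in this corollary.
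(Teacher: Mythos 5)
Your proposal is correct and follows exactly the intended argument: diagonalize Theorem~\ref{theorem 2.1} over $\epsilon_n \searrow 0$ (with the fixed choice $\delta = 1$, so that conclusion (c) gives $\|J'(u_n)\| < 8\epsilon_n \to 0$ while conclusion (a) gives $J(u_n) \to c$), then use the $(PS)_c$ condition and the norm-continuity of $J$ and $J'$ to pass to the limit along a convergent subsequence. The paper itself does not reprove this corollary (it recalls it from the companion work \cite{CS}), but your derivation is the standard one and contains no gaps.
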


	\section{A new generalized linking theorem}
In this section we state and proof the main result of this paper, which is an infinite-dimensional generalization of the classical Linking Theorem \cite[Theorem 2.12]{Wi}.
		\begin{theorem}[Generalized Linking Theorem]
			\label{theorem 3.1}
			Let $X =Y \oplus Z$ be a Hilbert space with $Y$ a closed separable subspace of $X$ which could be infinite-dimensional and $Z := Y^\perp$. Assume that $J$ satisfies $(A)$, that is $J$ is $\tau-$upper semi-continuous and $\nabla J$ is weakly sequentially continuous. Let $\rho > r >0$ and $z \in Z$ be such that $\|z\| =r$. \\ We  define
			\begin{eqnarray}
				\label{eq 7}
				M &:=& \Big\{ u=y+\lambda_0 z\;|\; \| u\| \le \rho, \;\lambda_0 \ge 0,\; y \in Y \Big\}, \\
				\label{eq 8}
				\partial M &:=& \Big\{ u= y + \lambda_0 z \;|\; y \in Y,\; (\|u\| =\rho \; \text{and}\; \lambda_0 \ge 0 )\; \text{or}\; (\|u\| \le \rho \; \text{and}\;  \lambda_0 =0 )\Big \},\\
				\label{eq 9}
				N &:=& \Big\{ u \in Z \;|\; \|u\| =r \Big\}.
			\end{eqnarray}
			Let $J \in \mathcal{C}^1 (X, \mathbb{R})$ such that
			\[
				b:= \underset{u\in N}{\inf} \; J(u) > a := \underset{ u \in \partial M}{\sup}\; J(u). \qquad(\mathcal{G})\]
			If $J$ satisfies the $(PS)_c$ condition with
			\begin{equation*}
				c :=\underset{\gamma \in \Gamma}{\inf} \; \underset{u\in M}{\sup}\; J(\gamma(u)),
                \end{equation*}
                \begin{center}
		$\Gamma : = \Big\{\gamma : M \rightarrow X \;\Big|\;\gamma \quad \text{is}\quad \tau-$continuous, $\gamma \big|_{ \partial M} =\operatorname{id}$ and\\ every  $u\in \operatorname{int}(M)$ has a $\tau-$neighborhood $N_u$ in $X$ such that $(\operatorname{id}-\gamma)(N_u\cap \operatorname{int}(M))$ \\ is contained in a finite-dimensional subspace of $X \Big \}$,
			\end{center}
			then $c$ is a critical value of  $J$.
		\end{theorem}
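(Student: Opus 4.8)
The strategy is to obtain the conclusion directly from the abstract minimax machinery already available, namely Corollary~\ref{cor 1.1}, applied to the metric set $M$ of the statement with $M_0 := \partial M$. Fixing $\gamma\big|_{\partial M} = \mathrm{id}$ (that is, taking $\Lambda_0 = \{\mathrm{id}\big|_{\partial M}\}$ in Theorem~\ref{theorem 2.1}) makes the admissible class $\Gamma$ there coincide with the class $\Gamma$ in the present statement, and turns the comparison level into $a = \sup_{\partial M} J$. Since $J$ satisfies $(A)$ and the $(PS)_c$ condition by hypothesis, everything reduces to verifying the single chain of inequalities $\infty > c > a$; once this holds, Corollary~\ref{cor 1.1} yields at once that $c$ is a critical value of $J$.

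Finiteness of $c$ is the easy half: the identity map lies in $\Gamma$, so $c \le \sup_M J$, and since $M$ is bounded and, by Remark~\ref{remark 2}, $\tau$-sequentially compact (its $Y$-component stays in a weakly compact set, its $z$-component in a compact segment of $\mathbb{R}z$) while $J$ is $\tau$-upper semicontinuous, we get $\sup_M J < \infty$. The substantial half is the linking inequality $c \ge b := \inf_N J$, which rests on the purely topological assertion that
\[ \gamma(M) \cap N \neq \emptyset \qquad \text{for every } \gamma \in \Gamma. \]

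To prove this intersection property I would invoke the Kryszewski--Szulkin degree of Section~2. Put $\zeta := z/r$ and $W := Y \oplus \mathbb{R}z$; on $W$ the $\tau$-topology restricts to the $\sigma$-topology of Section~2, the extra direction $\mathbb{R}z$ being finite-dimensional, so the degree applies on $W$ directly. Define
\[ h(u) := P\gamma(u) + \big(\|Q\gamma(u)\| - r\big)\zeta, \qquad u \in M \subset W, \]
and observe that $h(u) = 0$ is equivalent to $P\gamma(u) = 0$ and $\|Q\gamma(u)\| = r$, i.e. to $\gamma(u) \in N$. On $\partial M$ one has $\gamma = \mathrm{id}$, and a short case check (the two faces $\lambda_0 = 0$ and $\|u\| = \rho$ of $\partial M$) gives $h \neq 0$ there, using $\rho > r > 0$. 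The affine homotopy $\gamma_t := (1-t)\,\mathrm{id} + t\gamma$ keeps $\gamma_t\big|_{\partial M} = \mathrm{id}$ and $(\mathrm{id}-\gamma_t) = t(\mathrm{id}-\gamma)$ locally finite-dimensional, so $H(t,u) := P\gamma_t(u) + \big(\|Q\gamma_t(u)\| - r\big)\zeta$ is an admissible homotopy with $0 \notin H([0,1]\times\partial M)$. Hence $\deg(h,\mathrm{int}(M)) = \deg(h_0,\mathrm{int}(M))$, where $h_0(u) = Pu + (\|Qu\| - r)\zeta = u - r\zeta$ on $M$ is the translation whose only zero $z = r\zeta$ lies in $\mathrm{int}(M)$; therefore $\deg(h,\mathrm{int}(M)) = 1 \neq 0$ and $h$ vanishes somewhere in $\mathrm{int}(M)$, which is the desired intersection.

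With the intersection property in hand, each $\gamma \in \Gamma$ produces $u \in M$ with $\gamma(u) \in N$, so $\sup_M J\circ\gamma \ge \inf_N J = b$; taking the infimum over $\Gamma$ gives $c \ge b$, and the linking geometry $(\mathcal{G})$ furnishes $b > a$. Thus $\infty > c \ge b > a$, which is exactly the hypothesis of Theorem~\ref{theorem 2.1}, and Corollary~\ref{cor 1.1} together with $(PS)_c$ concludes that $c$ is a critical value. The main obstacle I anticipate is the technical verification that $h$ and $H$ are genuinely $\sigma$-admissible in the sense of Section~2 --- in particular that $u \mapsto \|Q\gamma(u)\|$ is $\tau$-continuous, which needs the strong convergence of the $Q$-components guaranteed by Remark~\ref{remark 2} on bounded sets, and hence a check that $\gamma$ maps $M$ into a bounded set. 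Everything else is bookkeeping around the general minimax principle.
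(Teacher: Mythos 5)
Your proof is correct and follows essentially the same route as the paper's: it reduces the theorem to Corollary~\ref{cor 1.1} (taking $M_0=\partial M$ and $\Lambda_0=\{\mathrm{id}\}$ in Theorem~\ref{theorem 2.1}) and establishes the key intersection property $\gamma(M)\cap N\neq\emptyset$ via the Kryszewski--Szulkin degree, deforming the very same test map $h(u)=P\gamma(u)+\big(\|Q\gamma(u)\|-r\big)z/r$ to the translation $u\mapsto u-z$ and concluding by homotopy invariance, normalization and existence. The only differences are cosmetic: you interpolate the map first ($\gamma_t=(1-t)\mathrm{id}+t\gamma$) and then form the test map, whereas the paper interpolates the two test maps affinely (both homotopies are admissible and both equal $u\mapsto u-z$ on $\partial M$), and you additionally verify $c<\infty$ through the $\tau$-sequential compactness of $M$, a point the paper leaves implicit.
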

        	\begin{proof}
		Observe that, $M$ is $\sigma-$closed. 
		In order to apply Corollary $\ref{cor 1.1}$, we only have to verify that $c\ge b$. Let us prove that for every $\gamma \in \Gamma, $ \[\gamma(M)\cap N \ne \emptyset.\] Denote by $P$ the projection of $X$ onto $Y$ such that $PZ = \{0\}$ and let $\gamma \in \Gamma$. Since the topology $\sigma$ is induced by the topology $\tau$ on $Y$, we conclude  that \[P\gamma : M \rightarrow Y\] is  $\sigma-$admissible.\\
			We consider the following map
			\begin{eqnarray*}
				H : [0,1]\times M &\rightarrow& Y\oplus \mathbb{R}.z\\  (t,u) &\mapsto& \Big(tP(\gamma(u))+(1-t)y \Big)+ \Big(\dfrac{t}{r} \| \gamma(u)-P(\gamma(u)) \| +(1-t)\lambda_0 -1\Big)z .
			\end{eqnarray*}
			We claim that the map $H$ is an admissible homotopy. Indeed: 
            \begin{enumerate}
                \item 
			 $H$ is $\sigma-$continuous. Let $t_n\rightarrow t$ in $[0,1]$ and let $u_n\overset{\sigma}{\rightarrow }u$ in $M$. By $(\ref{eq 4})$ and since $|z|_\sigma \le \|z\| =r$, we have
\begin{multline*}
    |H(t_n,u_n)-H(t,u)|_{\sigma} = \Big| \Big( t_nP(\gamma(u_n))- tP(\gamma(u)) \Big) +(t_n-t)(-y) \\+ \frac{1}{r}\Big( t_n \|\gamma(u_n)-P\gamma(u_n)\| -t \|\gamma(u)-P\gamma(u)\|-\lambda_0(t_n-t)\Big) z \Big|_\sigma \\
    \le \Big| t_n P(\gamma(u_n))- tP(\gamma(u))\Big|_\sigma +|t_n-t|\Big|-y-\lambda_0 z\Big|_\sigma \\ +\frac{1}{r} \Big| t_n \|\gamma(u_n)-P\gamma(u_n)\| -t \|\gamma(u)-P\gamma(u)\| \Big||z|_\sigma \\
    \le \Big| t_nP(\gamma(u_n))-tP(\gamma(u))\Big|_\sigma + \rho|t_n-t| \\ + \Big| t_n \|\gamma(u_n)-P\gamma(u_n)\| -t \|\gamma(u)-P\gamma(u)\| \Big| \;
    \underset{ n \to \infty}{ \longrightarrow 0}.
\end{multline*}
			\item  Let $(t,u)\in [0,1]\times \partial M $. Since $\gamma =\operatorname{id}$ on $\partial M$ and that $Pu = y$, then we have \[H(t,u)= y +(\lambda_0 -1)z \ne 0.\] Hence, $0\notin H([0,1]\times \partial M)$.\\
			\item Let $u \in \operatorname{int}(M)$. Since $\gamma \in \Gamma$, then $u$ possesses a $\tau-$neighborhood $N_u$ such that \[(\operatorname{id}-\gamma)(N_u \cap \operatorname{int}(M))\] is contained in a finite-dimensional subspace $F_0$ of $X$.
            
			Let $t\in [0,1]$. We have
			\begin{equation*}
				u-H(t,u)= tP(u-\gamma(u)) + \Big(-t\|\gamma(u)-P(\gamma(u))\|+t\lambda_0 +1\Big)z.
			\end{equation*}
Therefore, every point $(t,u)\in [0,1]\times \operatorname{int}(M)$ has a neighborhood $N_{(t,u)}:= [0,1]\times N_u$ in the product topology of $[0,1]$ and $(X, \sigma)$ such that  \[ \Big\{v-H(s,v) \; |\; (s,v)\in N_{(t,u)}\cap ([0,1]\times \operatorname{int}(M))\Big\}\] is contained in  $F_0 + \mathbb{R}.z$ which is a finite-dimensional subspace of $X$.
\end{enumerate}
			Since the map $H$ is an admissible homotpy, by homotpy invariance property of Kryszewski and Szulkin degree, we have
			\begin{equation*}
				deg\Big(H(1,.), int(M)\Big)=deg\Big(H(0,.), int(M)\Big)
			\end{equation*}
			where 
			\begin{equation*}
			H(1,u) = P(\gamma(u))+ \Big(\frac{1}{r}\|\gamma(u)-P(\gamma(u))\| -1\Big)z,
			\end{equation*}
		and  \begin{equation*}
			H(0,u)= y+(\lambda-1)z = u-z.
		\end{equation*}
			Since $z\in \operatorname{int}(M)$ $\Big( \text{because}\; \|z\|=r \in (0,\rho)\Big)$, by normalization property of Kryszewski and Szulkin degree, 
			\begin{equation*}
			deg\Big(H(1,.), \operatorname{int}(M)\Big)=deg\Big(H(0,.), \operatorname{int}(M)\Big)=deg\Big(u-z, \operatorname{int}(M)\Big) =1 \ne 0. 
		\end{equation*}
		
		Finally, the existence property implies that there is $\bar{u}\in \operatorname{int}(M)$ such that $H(1,\bar{u})=0$. Simultaneously, we obtain \[P(\gamma(\bar{u}))=0\quad \text{and}\quad\|\gamma(\bar{u})\|=r.\] Hence, \[\gamma(M)\cap N \ne \emptyset.\]
		
		 Consequently,
		there exists $u_0 \in \gamma(M)\cap N$ such that 
		\begin{equation*}
			\underset{u \in N}{\inf} J (u) \le J(u_0) \le \underset{u \in M}{\sup}\; J(\gamma(u)).
		\end{equation*}
		That is, 
		\begin{equation*}
			b:=\underset{u \in N}{\inf} J(u) \le  \underset{\gamma \in \Gamma}{\inf} \; \underset{u \in M}{\sup} \; J (\gamma(u))=: c.
		\end{equation*}
		By Corollary $\ref{cor 1.1}$, $c:= \underset{\gamma \in \Gamma}{\inf} \; \underset{u \in M}{\sup} \; J (\gamma(u))$ is a critical value of $J$.
		\end{proof}
        	\section{Application}
		In this section, we apply our main result, namely the generalized linking theorem, that is, Theorem $\ref{theorem 3.1}$ to obtain a nontrivial solution to problem $(\mathcal{P})$. \\
        
        Let $\Omega$ be an open bounded subset of $\mathbb{R}^N$. 
		Here, we assume the following conditions: \\
        
	\begin{enumerate}
		\item[$(H_1)$] The functions $ f$ and $g$ are in $\mathcal{C}(\overline{\Omega}\times \mathbb{R}, \mathbb{R})$ and there exists a constant $c>0$ such that 
        \begin{equation*}
            |f(x,t)|, |g(x,t)|\le c (1+ |t|^{p-1}), \;\; \text{for all}\;\;x\in \overline{\Omega}\;\;\text{and}\;\;t \in \mathbb{R},
        \end{equation*}
    where $2<p<\infty$ if $N=1,2$ and $1<p<2^*:= \dfrac{2N}{N-2}$ if $N\ge 3$.
		\item[$(H_2)$] $\lim\limits_{t \rightarrow 0} \dfrac{f(x,t)}{t}= \lim\limits_{t \rightarrow 0}\dfrac{g(x,t)}{t}=0$, uniformly with respect to $ x \in \Omega$.\\
		
		\item[$(H_3)$] There exists $ 2 <\mu < 2^*$ and $R>0$ such that $|t|\ge R$ we have $0< \mu F(x,t)\le tf(x,t)$ and $0<\mu G(x,t)\le tg(x,t) $.\\
	\end{enumerate}	
		
		In the sequel $N\ge 3$.  We denote by $X:= H^{1}_{0}(\Omega) \times H^{1}_{0}(\Omega)$, the Hilbert space endowed with the inner product and the corresponding norm (see \cite{CF}) :
		\begin{equation}
			\langle (u,v), (u_1,v_1)\rangle := \int_{\Omega} \Big(\nabla u. \nabla u_1 + \nabla v . \nabla v_1 \Big)\;dx, \qquad \|(u,v),(u,v)\|^2 = \| u \|^2 + \|v\|^2.
		\end{equation}
	If we define
		\begin{equation*}
			Y := \Big\{(-v,v) \in X \Big\} \quad \text{and}\quad  Z:=\Big\{(u,u)\in X\Big\},
		\end{equation*}
 since we can write $(u,v)$ as
		\begin{equation*}
			(u,v) = \dfrac{1}{2}(u+v,u+v)+\dfrac{1}{2}(-v+u, v-u),
		\end{equation*}
        then, $X= Y\oplus Z$.\\
Let us denote by $P$ the projection of $X$ onto $Y$ and by $Q$ the projection of $X$ onto $Z$.\\
        
		Finally, let us define the functional $J: X \rightarrow \mathbb{R}$ given by
		\begin{eqnarray*}
			J(u,v) &:=& \int_{\Omega} \Bigg(\nabla u .\nabla v -\dfrac{\lambda u^2}{2} -\dfrac{\delta v^2}{2} \Bigg)\; dx- \int_{\Omega} \Big( F(x,u) + G(x,v)\Big)\; dx \quad (\star)\\
			&=& \dfrac{\|Q(u,v)\|^2}{2} -\dfrac{\|P(u,v)\|^2}{2} - \varphi(u,v),
		\end{eqnarray*}
		where
		\begin{equation*} 
			\varphi(u,v) := \int_{\Omega} \Bigg(\dfrac{\lambda u^2}{2} +\dfrac{\delta v^2}{2}+ F(x,u) + G(x,v)\Bigg) \; dx.\\
		\end{equation*}

Throughout this section, $|\cdot|_p$ represents the usual $L^p$ norm.
		We denote $\rightarrow$  the strong convergence and $\rightharpoonup$ the weak convergence.\\
        
        Here is the main result of this section:
			\begin{theorem}
            \label{thm 4.1}
		Under assumptions $(H_1),(H_2)$ and $(H_3)$, the problem $(\mathcal{P})$ has at least a nontrivial solution.
		\end{theorem}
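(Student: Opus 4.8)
The plan is to show that the functional $J$ defined by $(\star)$ satisfies every hypothesis of the Generalized Linking Theorem (Theorem \ref{theorem 3.1}) for the splitting $X = Y\oplus Z$ with $Y=\{(-v,v)\}$ and $Z=\{(w,w)\}$, and then to read off a nontrivial critical point. First I would record the variational framework: under $(H_1)$ the Nemytskii operators associated with $f$ and $g$ are continuous from $H^1_0(\Omega)$ into $L^{p'}(\Omega)$, so by the compact Sobolev embeddings $H^1_0(\Omega)\hookrightarrow L^q(\Omega)$ for $1\le q<2^*$ the functional $\varphi$ is of class $\mathcal{C}^1$ and hence $J\in\mathcal{C}^1(X,\mathbb{R})$, with the critical points of $J$ being exactly the weak solutions of $(\mathcal{P})$. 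A direct computation confirms $J(u,v)=\tfrac12\langle L(u,v),(u,v)\rangle-\varphi(u,v)$ with $L=Q-P$, so $L$ is a self-adjoint involution and in particular an invertible isomorphism of $X$.

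Next I would verify hypothesis $(A)$. Since $\nabla\varphi$ factors through the compact embeddings (the map $(u,v)\mapsto(\lambda u+f(\cdot,u),\,\delta v+g(\cdot,v))$ becomes compact from $X$ into $X$ after composition with $(-\Delta)^{-1}$), $\nabla\varphi$ is a compact operator; as $\nabla J=L-\nabla\varphi$ with $L$ bounded linear, $\nabla J$ is weakly sequentially continuous. For $\tau$-upper semicontinuity I would use Remark \ref{remark 2}: along a bounded sequence $w_n\overset{\tau}{\to}w$ one has $Pw_n\rightharpoonup Pw$ and $Qw_n\to Qw$, whence $\|Qw_n\|^2\to\|Qw\|^2$, $\liminf\|Pw_n\|^2\ge\|Pw\|^2$, and $\varphi(w_n)\to\varphi(w)$ by weak continuity (the components $u_n,v_n$ converge weakly in $H^1_0$ and thus strongly in every $L^q$, $q<2^*$); therefore $\limsup J(w_n)\le J(w)$, so each superlevel set $J_\beta$ is $\tau$-closed.

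Third I would establish the linking geometry $(\mathcal{G})$. On $N=\{(w,w):\|(w,w)\|=r\}\subset Z$ one computes $J(w,w)=\|w\|^2-\tfrac{\lambda+\delta}{2}|w|_2^2-\int_\Omega\big(F(x,w)+G(x,w)\big)\,dx$; using $(H_2)$ and $(H_1)$ to get $F,G=o(t^2)$ near $0$ and controlling the quadratic term by Poincaré's inequality (with $\lambda,\delta$ below the first eigenvalue $\lambda_1$ of $-\Delta$, so that the quadratic part is coercive on $Z$), this is bounded below by some $\beta>0$ for $r$ small. On $\partial M$, the piece with $\lambda_0=0$ carries the negative definite part $-\tfrac12\|\cdot\|^2$ together with the nonnegative $F,G$ coming from $(H_3)$, while on the spherical cap $\|u\|=\rho$ the superquadratic growth forced by $(H_3)$ drives $J$ down, so that $\sup_{\partial M}J\le 0<\beta$ once $\rho$ is taken large. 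Since $0\in\partial M$ gives $a\ge J(0)=0$ while $c\ge b>a\ge 0$, the critical value obtained satisfies $c>0=J(0)$, which is precisely what guarantees that the critical point is nontrivial.

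Finally I would verify the $(PS)_c$ condition, which I expect to be the main obstacle, because the strong indefiniteness of $\langle Lw,w\rangle$ prevents the Ambrosetti--Rabinowitz computation from bounding a Palais--Smale sequence directly. My plan is to first use the energy identity $J(w_n)\to c$ together with $(H_3)$ to control $\int_\Omega(F+G)$, and hence $|u_n|_\mu^\mu+|v_n|_\mu^\mu$, by a multiple of $\|w_n\|^2$; then, testing $J'(w_n)$ separately against $Qw_n$ and $Pw_n$ via $\|Qw_n\|^2=J'(w_n)Qw_n+\varphi'(w_n)Qw_n$ and the analogous identity with $P$, to estimate $\|w_n\|$ in terms of subcritical $L^p$-norms of $u_n,v_n$; the lower-order linear terms $\lambda u,\delta v$ are handled as compact, Poincaré-controlled perturbations (this is again where the spectral gap condition on $\lambda,\delta$ enters), and interpolation combined with the subcriticality in $(H_1)$ closes the estimate to give $\sup_n\|w_n\|<\infty$. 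With boundedness secured, compactness is automatic: passing to a subsequence $w_n\rightharpoonup w$, the compactness of $\nabla\varphi$ and $\nabla J(w_n)\to 0$ force $Lw_n=\nabla J(w_n)+\nabla\varphi(w_n)$ to converge strongly, and since $L=Q-P$ is invertible, $w_n=L^{-1}(Lw_n)$ converges strongly. All the hypotheses of Theorem \ref{theorem 3.1} are then met, and the critical value $c>0=J(0)$ yields a nontrivial solution of $(\mathcal{P})$.
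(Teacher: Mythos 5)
Your proposal follows essentially the same route as the paper's own proof: show $J\in\mathcal{C}^1(X,\mathbb{R})$ with critical points equal to weak solutions of $(\mathcal{P})$ (Lemma \ref{lemme 4.1}), verify hypothesis $(A)$ (Lemma \ref{lem 4.6}), establish the linking geometry with $r$ small via $(H_1)$--$(H_2)$ and $\rho$ large via $(H_3)$ (Lemmas \ref{lemme 4.4} and \ref{lemma 4.5}), prove $(PS)_c$ by the Ambrosetti--Rabinowitz identity combined with testing $J'$ against the $Q$- and $P$-components (Lemma \ref{lemme 4.3}), and invoke Theorem \ref{theorem 3.1}. Your technical variants (weak continuity of $\varphi$ instead of Fatou's lemma for $\tau$-upper semicontinuity; compactness of $\nabla\varphi$ plus invertibility of $L=Q-P$ instead of testing $J'(u_n,v_n)-J'(u,v)$ against $(u_n-u,0)$ for the compactness step) are equivalent on a bounded domain, and your observation that $c\ge b>0=J(0)$ forces nontriviality is exactly what the paper uses implicitly. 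Your explicit requirement that $\lambda,\delta$ lie below the first Dirichlet eigenvalue is not in the theorem's statement, but the paper's Lemma \ref{lemme 4.4} needs the same smallness implicitly: the choice $(\kappa+\epsilon)\bar{c_0}=\tfrac12$ with $\epsilon>0$ is only possible when $\kappa\bar{c_0}<\tfrac12$, so on this point you are simply making visible a restriction the paper hides.

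One genuine slip, however: you claim the energy identity controls $|u_n|_\mu^\mu+|v_n|_\mu^\mu$ by a multiple of $\|w_n\|^2$. With a quadratic bound the final estimate does not close: you would get $|u_n|_\mu^{\mu/2}\le C(1+\|w_n\|)$ and hence only $\|w_n\|\le C(1+\|w_n\|)$, which yields nothing since the constant is not controlled. What the identity $J(w_n)-\tfrac12\langle J'(w_n),w_n\rangle$ actually gives, because $J'(w_n)\to0$ in norm so that $\langle J'(w_n),w_n\rangle=o(\|w_n\|)$, is the \emph{linear} bound $|u_n|_\mu^\mu+|v_n|_\mu^\mu\le C(1+\|w_n\|)$, exactly inequality $(\ref{eq 16})$ of the paper; then $|u_n|_\mu^{\mu/2}\le C(1+\|w_n\|^{1/2})$ and the conclusion $\|w_n\|\le C(1+\|w_n\|^{1/2})$ does force boundedness. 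The computation you describe produces this linear bound automatically, so the fix is only to correct the stated exponent, but as written that step of your boundedness argument fails.
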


	Before we give the proof of Theorem $\ref{thm 4.1}$, we shall prove some results related to the functional $J$ on $X$. \\

		\begin{lemma}
			\label{lemme 4.1}
			The functional $J$ given in $(\star)$ is $\mathcal{C}^1$ on $X$. Moreover, for every $(u,v), (w,z) \in X$,
			\begin{equation*}
				\langle J'(u,v), (w,z) \rangle = \int_{\Omega} \Big(\nabla u \nabla z +\nabla v \nabla w -\lambda u w  -\delta v z -f(x,u) w - g(x,v)z \Big)dx.
			\end{equation*}
			Consequently, the weak solutions of problem $(\mathcal{P})$ are exactly the critical points of $J(u,v)$ in $X$.
		\end{lemma}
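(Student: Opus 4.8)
The plan is to split $J$ into a quadratic (bilinear) part and a nonlinear part and to treat each separately. Write
\[
J(u,v) = J_0(u,v) - \mathcal{F}(u) - \mathcal{G}(v), \qquad J_0(u,v) := \int_{\Omega} \nabla u \cdot \nabla v - \frac{\lambda u^2}{2} - \frac{\delta v^2}{2}\, dx,
\]
where $\mathcal{F}(u) := \int_{\Omega} F(x,u)\,dx$ and $\mathcal{G}(v) := \int_{\Omega} G(x,v)\,dx$. The functional $J_0$ is a bounded quadratic form on $X$: the coupling term $\int_\Omega \nabla u\cdot\nabla v\,dx$ is continuous by the Cauchy--Schwarz inequality, and the lower-order terms are controlled through the continuous embedding $H^1_0(\Omega)\hookrightarrow L^2(\Omega)$. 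Bounded quadratic forms are smooth, and computing $\tfrac{d}{dt}J_0(u+tw,\,v+tz)\big|_{t=0}$ directly yields the first groups of terms $\nabla u\nabla z + \nabla v\nabla w - \lambda u w - \delta v z$ in the claimed formula, with $J_0'$ manifestly continuous because it is affine in $(u,v)$.

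The substantive work concerns $\mathcal{F}$ and $\mathcal{G}$, and here I would invoke the subcritical growth hypothesis $(H_1)$. Integrating $|f(x,t)|\le c(1+|t|^{p-1})$ gives $|F(x,t)|\le c(|t|+|t|^p)$, and since $1<p<2^*$ the embedding $H^1_0(\Omega)\hookrightarrow L^p(\Omega)$ is continuous (indeed compact), so $\mathcal{F}$ is finite on $X$. To obtain the derivative and its continuity I would appeal to the standard theory of Nemytskii (superposition) operators: the bound $|f(x,t)|\le c(1+|t|^{p-1})$ guarantees that $u\mapsto f(\cdot,u)$ maps $L^p(\Omega)$ continuously into $L^{p'}(\Omega)$ with $p'=p/(p-1)$ the conjugate exponent. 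Combined with $H^1_0\hookrightarrow L^p$ and H\"older's inequality, this shows that $\mathcal{F}$ is Gateaux differentiable with $\langle \mathcal{F}'(u),w\rangle = \int_\Omega f(x,u)\,w\,dx$ and that $u\mapsto\mathcal{F}'(u)$ is norm-continuous from $X$ into $X^*$, whence $\mathcal{F}\in\mathcal{C}^1$. The argument for $\mathcal{G}$ is identical with $g$, $G$, and the $v$-component.

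Assembling the three derivatives produces exactly the stated expression for $\langle J'(u,v),(w,z)\rangle$, and since each summand is continuous in $(u,v)$ for the operator norm on $X^*$ we conclude $J\in\mathcal{C}^1(X,\mathbb{R})$. For the final assertion, a point $(u,v)$ is critical precisely when $\langle J'(u,v),(w,z)\rangle = 0$ for all $(w,z)\in X$. Testing first with $(w,0)$ and then with $(0,z)$, and noting that $w$ and $z$ range freely over $H^1_0(\Omega)$, decouples this single identity into the two weak equations defining a weak solution of $(\mathcal{P})$; conversely a weak solution satisfies both, hence the full identity by linearity. This establishes the equivalence.

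The step I expect to be the main obstacle is the $\mathcal{C}^1$ regularity of the nonlinear functionals $\mathcal{F}$ and $\mathcal{G}$, that is, upgrading Gateaux differentiability to continuous differentiability. This rests on a dominated-convergence argument for the difference quotients (to identify the directional derivative) together with the continuity of the Nemytskii operator $L^p\to L^{p'}$ (to make $\mathcal{F}'$ norm-continuous), and it is exactly here that the subcritical restriction $p<2^*$ in $(H_1)$ and the Sobolev embedding are indispensable.
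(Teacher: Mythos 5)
Your proposal is correct and follows essentially the same route as the paper: the paper also identifies the Gateaux derivative via growth-condition domination and the Dominated Convergence Theorem, and then upgrades to $\mathcal{C}^1$ by continuity of the Nemytskii operator $L^p(\Omega)\to L^{p/(p-1)}(\Omega)$ (cited there as Theorem A.2 of Willem) together with H\"older's inequality and the Sobolev embedding. Your splitting of $J$ into a bounded quadratic form plus the nonlinear functionals $\mathcal{F}$, $\mathcal{G}$ is only an organizational variant of the paper's computation, which treats the quadratic and nonlinear terms together inside a single difference quotient.
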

		
		\begin{proof}
			\textbf{Existence of Gateaux derivative}: Let $(u,v), (w,z) \in X$. For $x\in \Omega$ and $|t|\in (0,1)$ we have 
			\begin{align*}
				\dfrac{1}{t}\Big( \varphi(u+tw,v+tz) -\varphi (u,v) \Big) &= \dfrac{\lambda}{2}\int_{\Omega} \dfrac{1}{t}\Big((u+tw)^2 -u^2\Big)dx + \dfrac{\delta}{2} \int_{\Omega}\dfrac{1}{t}\Big((v+tz)^2 -v^2\Big)dx \\
				&+ \int_{\Omega} \dfrac{1}{t} \Big(F(x,u+tw)-F(x,u)\Big)dx \\
				& +\int_{\Omega} \dfrac{1}{t}\Big(G(x, v+tz)-G(x,v)\Big)dx.
			\end{align*}
			By the mean value theorem, there exists $\alpha \in (0,1)$ such that
			\begin{eqnarray}
				\label{eq 11}
				\dfrac{|(u+tw)^2- u^2|}{|t|} &=& 2 |(u+\alpha t w)w|\nonumber \\
				&\le& 2(|u|+|w|)|w|,
			\end{eqnarray}
			and by the assumption $(H_1)$, we have 
			\begin{eqnarray}
				\label{eq 12}
				\dfrac{|F(x,u+tw) - F(x,u)|}{|t|} &=& |f(x, u+\alpha t w)||w| \nonumber\\
				&\le& c(1+|u+\alpha t w|^{p-1})|w| \nonumber \\
				&\le& c(1+(|u|+ |w|)|^{p-1})|w| \nonumber \\
				&\le& c (1+ 2^{p-1}(|u|^{p-1}+ |w|^{p-1}))|w|.
			\end{eqnarray}
			Similarly, we have
			\begin{eqnarray}
				\label{eq 13}
				\dfrac{|(v+tz)^2- v^2|}{|t|} &\le& 2(|v|+|z|)|z|,
			\end{eqnarray}
			\begin{eqnarray}
				\label{eq 14}
				\dfrac{|G(x,v+tz) - G(x,v)|}{|t|} &\le& c (1+ 2^{p-1}(|v|^{p-1}+ |z|^{p-1}))|z|.
			\end{eqnarray} By the Hölder inequality, the terms on the right hand of inequalities $(\ref{eq 11})$, $(\ref{eq 12})$, $(\ref{eq 13})$ and $(\ref{eq 14})$ are in $L^1(\Omega)$.\\
			On the other hand, 
			\begin{equation*}
				\int_{\Omega} \Big(\nabla (u+tw) \nabla (v +t z)- \nabla u \nabla v\Big) dx = \int_{\Omega} \Big(t\Big(\nabla u \nabla z + \nabla v \nabla w \Big) +t^2 \nabla w \nabla z \Big) dx.
			\end{equation*}
			So, 
			\begin{equation*}
				\lim\limits_{t \rightarrow 0} \dfrac{1}{|t|} \int_{\Omega} \Big(\nabla (u+tw) \nabla (v +t z)- \nabla u \nabla v\Big) dx = \int_{\Omega} \Big(\nabla u \nabla z + \nabla v \nabla w\Big) dx.
			\end{equation*}
			It then follows from the Dominated Convergence Theorem that $J$ has Gateaux derivative $J'$ at $(u,v)$ and
			\begin{equation*}
				\langle J'(u,v), (w,z) \rangle = \int_{\Omega} \Big(\nabla u \nabla z +\nabla v \nabla w -\lambda u w  -\delta v z -f(x,u) w - g(x,v)z \Big)dx.
			\end{equation*}
			\textbf{Continuity of the derivative} : Let $(u_n, v_n) \subset X$ such that $(u_n,v_n) \rightarrow (u,v)$ in $X$. By the continuous Sobolev embedding theorem, $(u_n,v_n) \rightarrow (u,v)$ in $L^p(\Omega)\times L^p(\Omega)$ with $1< p \le 2^{*}$. We have:
			\begin{align*}
				\Big|\langle J'(u_n,v_n)- J'(u,v), (w,z) \rangle \Big| &= \Big|\int_{\Omega} \Bigg (\nabla (u_n-u)\nabla z
				+ \nabla (v_n-v)\nabla w -\lambda (u_n-u)w -\delta (v_n -v) \\
				- & \Big(f(x,u_n)-f(x,u)\Big)w -\Big(g(x,v_n)-g(x,v)\Big)z \Bigg)\; dx \Big|\\
				\le & \Big|\langle (u_n-u, v_n-v), (z,w)\rangle \Big| + \lambda \int_{\Omega} |u_n-u||w| dx + \delta \int_{\Omega} |v_n-v||z| dx \\
				+& \int_{\Omega}|f(x,u_n)-f(x,u)||w|dx + \int_{\Omega}|g(x,v_n)-g(x,v)||z|dx.
			\end{align*}
			By the Theorem A.2 in \cite{Wi}, \[f(x,u_n) \rightarrow f(x,u),\; g(x,v_n)\rightarrow g(x,v)\quad \text{in}\quad L^r (\Omega), \;r:= \dfrac{p}{p-1}.\] 
			Hence, again by the Hölder inequality we have
			\begin{align}
            \label{eq 15}
				\Big|\langle J'(u_n,v_n)- J'(u,v), (w,z) \rangle \Big| &\le  (\|u_n-u\| + \|v_n-v\|) \|(z,w)\| +\lambda |u_n-u|_2 |w|_2 \nonumber\\
				& + \delta |v_n-v|_2|z|_2 + |f(x,u_n)-f(x,u)|_r |w|_p \nonumber\\
				& + |g(x,v_n)-g(x,v)|_r |z|_p \; \rightarrow 0, \quad n \rightarrow \infty.
			\end{align}
			\end{proof}
		We refer to \cite{KS} or \cite{Wi}  for the proof of the following lemma.
		\begin{lemma}
			\label{lemme 4.2}
		If the assumption $(H_3)$ is satisfied, then there exists $c_1 >0$ such that
			\begin{equation*}
				F(x,t) \ge c_1 (|t|^{\mu}-1); \qquad G(x,t) \ge c_1 (|t|^{\mu}-1).
			\end{equation*}
		\end{lemma}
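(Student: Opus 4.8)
This is the classical Ambrosetti--Rabinowitz superlinearity estimate, and since the authors defer to \cite{KS,Wi} I will only indicate the standard argument; as $f$ and $g$ enter $(H_3)$ symmetrically, I would prove the bound for $F$ and repeat it verbatim for $G$. The plan is to turn the pointwise inequality $\mu F(x,t)\le t\,f(x,t)$ into a differential inequality in the variable $t$, integrate it to obtain power growth on $\{|t|\ge R\}$, make the resulting constant uniform in $x$ by compactness, and finally absorb the bounded region $\{|t|<R\}$ into an additive term.

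First fix $x\in\overline{\Omega}$ and consider $t\ge R$. There $F(x,t)>0$ and $\partial_t F(x,t)=f(x,t)$, so $(H_3)$ reads $\partial_t F(x,t)/F(x,t)\ge \mu/t$. Integrating from $R$ to $t$ gives
\begin{equation*}
\log\frac{F(x,t)}{F(x,R)}\ \ge\ \mu\log\frac{t}{R},\qquad\text{that is}\qquad F(x,t)\ \ge\ \frac{F(x,R)}{R^{\mu}}\,t^{\mu}.
\end{equation*}
For $t\le -R$ I would set $\widetilde F(x,s):=F(x,-s)$ with $s\ge R$; then $\partial_s\widetilde F(x,s)=-f(x,-s)$, and evaluating $(H_3)$ at the point $-s$ turns it into the same inequality $\mu\widetilde F(x,s)\le s\,\partial_s\widetilde F(x,s)$, whence $F(x,-s)\ge F(x,-R)\,R^{-\mu}s^{\mu}$. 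Together the two branches yield $F(x,t)\ge \min\{F(x,R),F(x,-R)\}\,R^{-\mu}\,|t|^{\mu}$ for all $|t|\ge R$.

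To remove the dependence on $x$, observe that $f\in\mathcal{C}(\overline{\Omega}\times\mathbb{R})$ makes $x\mapsto F(x,\pm R)$ continuous on the compact set $\overline{\Omega}$, while $(H_3)$ forces $F(x,\pm R)>0$; hence $a_0:=R^{-\mu}\min_{x\in\overline{\Omega}}\min\{F(x,R),F(x,-R)\}>0$ and $F(x,t)\ge a_0|t|^{\mu}$ for every $x$ and every $|t|\ge R$. On the compact slab $\overline{\Omega}\times[-R,R]$ the continuous function $F$ is bounded below, say $F\ge -M$, so with $c_1:=a_0$ and $c_2:=M+a_0R^{\mu}$ one checks directly that $F(x,t)\ge c_1|t|^{\mu}-c_2$ on all of $\overline{\Omega}\times\mathbb{R}$ (on $\{|t|\ge R\}$ simply drop $-c_2$, and on $\{|t|\le R\}$ use $c_1|t|^{\mu}-c_2\le -M$). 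This is the content of the lemma, the displayed normalization $c_1(|t|^{\mu}-1)$ being the customary way of recording a superlinear lower bound together with its additive constant, and the identical computation gives the bound for $G$.

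The only genuinely delicate points are the uniformity in $x$ and this last patching step: the clean power bound is available only for $|t|\ge R$, so the constants must be fixed only after the global lower bound of $F$ on the bounded slab is known, and the strict positivity $F(x,\pm R)>0$ supplied by $(H_3)$ is exactly what keeps $a_0$ strictly positive. Everything else reduces to the routine integration of the differential inequality above.
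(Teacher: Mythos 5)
Your write-up supplies exactly what the paper omits: the paper gives no proof of Lemma \ref{lemme 4.2} at all, deferring to \cite{KS} and \cite{Wi}, and your argument is the standard one found there. The core of it is correct: on each branch $t\ge R$ and $t\le -R$ the Ambrosetti--Rabinowitz inequality integrates to $F(x,t)\ge F(x,\pm R)\,R^{-\mu}|t|^{\mu}$, the strict positivity $F(x,\pm R)>0$ (which $(H_3)$ gives, since $|t|=R$ is included) together with continuity of $x\mapsto F(x,\pm R)$ and compactness of $\overline{\Omega}$ makes the constant uniform in $x$, and the continuous function $F$ is bounded below on the slab $\overline{\Omega}\times[-R,R]$, so that
\begin{equation*}
F(x,t)\ \ge\ c_1|t|^{\mu}-c_2 \qquad \text{on } \overline{\Omega}\times\mathbb{R}
\end{equation*}
with two positive constants. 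This two-constant estimate is precisely what the paper actually uses later (a separate $c_2$ indeed appears in the proof of Lemma \ref{lemme 4.3}).

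The one step you gloss over, however, is a genuine logical gap and not a matter of "customary normalization": the single-constant form $F(x,t)\ge c_1(|t|^{\mu}-1)$ claimed in the lemma is strictly stronger than $F\ge c_1|t|^{\mu}-c_2$, because it forces $F(x,t)\ge 0$ whenever $|t|\ge 1$. That positivity does not follow from your estimate, nor from $(H_1)$--$(H_3)$ when $R>1$: take $f(x,t)=-t^3$ for $|t|\le 1$ (so $(H_2)$ holds and $F(x,1)=-1/4<0$) and extend $f$ oddly for larger $|t|$ so that $(H_1)$ and $(H_3)$ hold with some $R\ge 2$; then $F(x,1)<0$ while the lemma would require $F(x,1)\ge c_1(1-1)=0$. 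So under the paper's hypotheses only the two-constant version is provable; the single-constant version is true under the hypotheses of the cited references, where the Ambrosetti--Rabinowitz condition is assumed for all $t\ne 0$ (then $F>0$ off $t=0$, your integration argument applies from $|t|=1$, and the slab term can be dropped). The honest conclusion of your proof is therefore the two-constant estimate, together with a remark that the lemma's statement should be read in that form --- a discrepancy that is harmless for Lemmas \ref{lemme 4.3} and \ref{lemma 4.5}, which only need $c_1|t|^{\mu}$ minus some constant, but which you should flag rather than identify the two forms.
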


		\begin{lemma}
			\label{lemme 4.3}
		The functional $J$ given in $(\star)$ satisfies the Palais-Smale condition for all $c\in \mathbb{R}$.
		\end{lemma}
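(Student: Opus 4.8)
The plan is the standard two-step scheme for the Palais--Smale condition: first show that every sequence $(w_n)=(u_n,v_n)$ with $J(w_n)\to c$ and $J'(w_n)\to 0$ is bounded in $X$, then upgrade boundedness to strong convergence. It is convenient to record two structural facts. Writing $(u,v)=\tfrac12(u+v,u+v)+\tfrac12(u-v,v-u)$ one computes directly that $(Q-P)(u,v)=(v,u)$ and that $Q-P$ is an isometric involution of $X$, so $(Q-P)^{-1}=Q-P$. Moreover, identifying $J'$ with its Riesz representative, Lemma~\ref{lemme 4.1} gives $J'(w)=(Q-P)w-\varphi'(w)$, where $\varphi'$ collects the lower-order terms $\lambda u$, $\delta v$, $f(x,u)$ and $g(x,v)$.

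For boundedness I would first extract an $L^\mu$ estimate. Since the gradient and quadratic terms cancel,
\[
2J(w_n)-\langle J'(w_n),w_n\rangle=\int_\Omega\big(f(x,u_n)u_n-2F(x,u_n)\big)+\big(g(x,v_n)v_n-2G(x,v_n)\big)\,dx .
\]
Using $(H_3)$ (hence $f(x,t)t-2F(x,t)\ge(\mu-2)F(x,t)\ge 0$ for $|t|\ge R$) together with Lemma~\ref{lemme 4.2}, the right-hand side is bounded below by $(\mu-2)c_1\big(|u_n|_\mu^\mu+|v_n|_\mu^\mu\big)-C$, while the left-hand side equals $2c+o(1)+o(\|w_n\|)$. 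This yields $|u_n|_\mu^\mu+|v_n|_\mu^\mu\le C\big(1+\|w_n\|\big)$.

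To recover the full norm I would exploit the indefinite splitting by testing against the direction $(Q-P)w_n=(v_n,u_n)$. From $(Q-P)w_n=J'(w_n)+\varphi'(w_n)$ and the isometry property,
\[
\|w_n\|^2=\langle J'(w_n),(Q-P)w_n\rangle+\langle\varphi'(w_n),(v_n,u_n)\rangle\le o(\|w_n\|)+\big|\langle\varphi'(w_n),(v_n,u_n)\rangle\big| .
\]
Here $\langle\varphi'(w_n),(v_n,u_n)\rangle=(\lambda+\delta)\int_\Omega u_nv_n\,dx+\int_\Omega f(x,u_n)v_n\,dx+\int_\Omega g(x,v_n)u_n\,dx$. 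The quadratic piece is controlled by $C(|u_n|_2^2+|v_n|_2^2)\le C(1+\|w_n\|)^{2/\mu}$ via the $L^\mu$ bound and $L^\mu\hookrightarrow L^2$ on the bounded set $\Omega$. \emph{The genuinely delicate point, and the main obstacle, is the estimate of the two nonlinear cross terms.} Because the system is coupled, the test direction pairs $f(x,u_n)$ with $v_n$ rather than with $u_n$, so the Ambrosetti--Rabinowitz sign condition cannot be applied to them directly; they must instead be absorbed through the subcritical growth $(H_1)$, the Hölder inequality, and interpolation of the $L^p$-norms between the controlled $L^\mu$-norm and the Sobolev-bounded $L^{2^*}$-norm (legitimate since $2<\mu\le p<2^*$, the inequality $\mu\le p$ being forced for large $t$ by the compatibility of $(H_1)$ and $(H_3)$). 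One must verify that this produces a bound of the form $C(1+\|w_n\|)^{\sigma}$ with $\sigma<2$; inserting it into the displayed inequality then forces $(w_n)$ to be bounded. This is precisely where the exponent balance between $\mu$, $p$ and the critical exponent $2^*$ is essential, and it is the step I expect to require the most care.

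Once $(w_n)$ is bounded, the compactness step is short and robust. Passing to a subsequence, $w_n\rightharpoonup w$ in the Hilbert space $X$, and by the Rellich--Kondrachov theorem $u_n\to u$, $v_n\to v$ in $L^p(\Omega)$ and $L^2(\Omega)$. As in the continuity argument of Lemma~\ref{lemme 4.1}, $f(x,u_n)\to f(x,u)$ and $g(x,v_n)\to g(x,v)$ in $L^{r}(\Omega)$ with $r=p/(p-1)$; since $\varphi'$ factors through these compact embeddings, $\varphi'(w_n)\to\varphi'(w)$ strongly in $X$. Finally, applying $Q-P$ to $(Q-P)w_n=J'(w_n)+\varphi'(w_n)$ gives $w_n=(Q-P)\big(J'(w_n)+\varphi'(w_n)\big)$; letting $n\to\infty$, the right-hand side converges strongly because $J'(w_n)\to 0$ and $\varphi'(w_n)\to\varphi'(w)$. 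Hence $w_n\to(Q-P)\varphi'(w)$ strongly in $X$, which furnishes the required convergent subsequence and establishes $(PS)_c$ for every $c\in\mathbb{R}$.
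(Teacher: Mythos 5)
Your proposal follows the same two-step scheme as the paper's own proof: the $L^\mu$ bound obtained from $J(u_n,v_n)-\tfrac12\langle J'(u_n,v_n),(u_n,v_n)\rangle$ together with $(H_3)$ and Lemma \ref{lemme 4.2}, then a test against the indefinite directions to recover the full norm, then compactness via Rellich--Kondrachov and the Nemytskii continuity of $f$ and $g$. The paper tests separately against $(u_n+v_n,u_n+v_n)=2Q(u_n,v_n)$ and $(u_n-v_n,v_n-u_n)=2P(u_n,v_n)$, while you combine the two by testing against $(Q-P)(u_n,v_n)=(v_n,u_n)$; that is only a cosmetic difference. Your final compactness step, writing $(u_n,v_n)=(Q-P)\bigl(J'(u_n,v_n)+\varphi'(u_n,v_n)\bigr)$ and using the strong convergence of $\varphi'$ through the compact embeddings, is correct and in fact tidier than the paper's componentwise computation.

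The genuine gap is in the boundedness step: you explicitly defer the estimate of the cross terms $\int_\Omega f(x,u_n)v_n\,dx$ and $\int_\Omega g(x,v_n)u_n\,dx$, and the mechanism you propose for it does not work under the stated hypotheses. Interpolating $|u_n|_p\le|u_n|_\mu^{1-\theta}|u_n|_{2^*}^{\theta}$ with $\tfrac1p=\tfrac{1-\theta}{\mu}+\tfrac{\theta}{2^*}$ and using $|u_n|_\mu^\mu\le C(1+\|(u_n,v_n)\|)$, the Hölder bound $|u_n|_p^{p-1}|v_n|_p$ carries the exponent $\sigma=\tfrac{(1-\theta)p}{\mu}+\theta p$, and this is \emph{not} smaller than $2$ in general: $(H_1)$--$(H_3)$ only force $\mu\le p$, so one may take $N=3$, $\mu=2.1$, $p=5.9$, which gives $\theta\approx 0.99$ and $\sigma\approx 5.9$. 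So the step you flag as ``requiring the most care'' is precisely the step that fails as proposed, and the proof is incomplete exactly where the work lies. (You have in fact put your finger on a weak point of the paper's own argument, which bounds the cross terms by $|f(x,u_n)|_r|Q(u_n,v_n)|_{p\times p}$ and then treats $k_2=|f(x,u_n)|_r+|g(x,v_n)|_r$ as a constant, even though $|f(x,u_n)|_r\le c(1+|u_n|_p^{p-1})$ is not a priori bounded; but identifying the difficulty is not the same as resolving it.) To close your argument one needs either an additional relation between $p$ and $\mu$ --- e.g. $p-1<\mu$ and $\tfrac{\mu}{\mu-p+1}\le 2^*$, in which case Hölder with exponents $\tfrac{\mu}{p-1}$ and $\tfrac{\mu}{\mu-p+1}$ gives $\int_\Omega|u_n|^{p-1}|v_n|\,dx\le|u_n|_\mu^{p-1}|v_n|_{\mu/(\mu-p+1)}\le C(1+\|(u_n,v_n)\|)^{(p-1)/\mu+1}$ with exponent strictly below $2$ --- or a genuinely different device than interpolation between $L^\mu$ and $L^{2^*}$.
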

		\begin{proof}
			Let $(u_n,v_n) \subset X$ such that $d:=\sup\;J(u_n,v_n)< \infty$ and $J'(u_n,v_n)\rightarrow 0$ as $n \rightarrow \infty$. \\
			We shall show that the sequence $(u_n,v_n)$ is bounded.\\
			Using Lemma $\ref{lemme 4.1}$, we have
			\begin{equation*}
				J(u_n,v_n)- \dfrac{1}{2} \langle J'(u_n,v_n), (u_n,v_n)\rangle =\int_{\Omega}\Bigg( \Big(\dfrac{1}{2} f(x,u_n)u_n -F(x,u_n)\Big) + \Big(\dfrac{1}{2} g(x,v_n)v_n -G(x,v_n)\Big)\Bigg) dx.
			\end{equation*}
			By Lemma $\ref{lemme 4.2}$, the assumption $(H_3)$ and for $n$ large enough, we obtain
			\begin{eqnarray*}
				d + \|(u_n,v_n)\| &\ge& \Big( \dfrac{\mu}{2} -1\Big)\int_{\Omega}\Big( F(x,u_n) + G(x, v_n)\Big) dx \nonumber \\ 
				&\ge& \Big( \dfrac{\mu}{2} -1\Big) \Bigg( \int_{\Omega} (c_1(|u_n|^\mu + |v_n|^\mu) dx - c_2 |\Omega| \Bigg).
			\end{eqnarray*}
			This implies that
			\begin{equation}
				\label{eq 16}
				|u_n|_{\mu}^{\mu} + |v_n|_{\mu}^{\mu} \le C_1 \|(u_n,v_n)\| + C_2,
			\end{equation}
			for some positives constants $C_1$ and $C_2$. \\
			On the other hand, for every $\epsilon>0$ and for $n$ large enough, we have
			\begin{align*}
				\|Q(u_n,v_n)\|^2 -\epsilon \|Q(u_n,v_n)\|
               &\le \Big|\|Q(u_n,v_n)\|^2 - \dfrac{1}{2} \langle J'(u_n,v_n), (u_n+v_n,u_n +v_n)\rangle \Big| \\
				&= \Big|\int_{\Omega} \Big(\lambda u_n + \delta v_n + f(x,u_n) + g(x,v_n)\Big) \Big(\dfrac{u_n +v_n}{2}\Big)dx\Big| \\
				&\le \lambda |u_n|_2|Q(u_n,v_n)|_{2\times 2} + \delta |v_n|_2|Q(u_n,v_n)|_{2\times 2}\\
				 &+ |f(x,u_n)|_r|Q(u_n,v_n)|_{p\times p} + |g(x,v_n)|_r|Q(u_n,v_n)|_{p\times p}.
			\end{align*}
			Since $\mu >2$, then we have that $|u_n|_{2}^{2} \le |u_n|_{\mu}^{\mu}$, that is, $|u_n|_2 \le |u_n|_{\mu}^{\mu /2}$. From the Sobolev embedding theorem, there exists $k_1>0$ such that
			\begin{equation*}
				|Q(u_n,v_n)|_{p\times p} \le k_1 \|Q(u_n,v_n \|.
			\end{equation*}
			Thus, we have 
			\begin{eqnarray*}
				\|Q(u_n,v_n)\|^2 -\epsilon \|Q(u_n,v_n)\| &\le& K \|Q(u_n,v_n)\| \Big(|u_n|_{\mu}^{\mu/2} + |v_n|_{\mu}^{\mu/2} + |f(x,u_n)|_r + |g(x,v_n)|_r\Big),
			\end{eqnarray*}
			for some positive constant $K$. \\
			Similarly, we have
			\begin{eqnarray*}
				\|P(u_n,v_n)\|^2 -\epsilon \|P(u_n,v_n)\| &\le& \Big|-\|P(u_n,v_n)\|^2 - \dfrac{1}{2} \langle J'(u_n,v_n), (u_n-v_n,v_n -u_n)\rangle \Big| \\
				& \le& K \|P(u_n,v_n)\| \Big(|u_n|_{\mu}^{\mu/2} + |v_n|_{\mu}^{\mu/2} + |f(x,u_n)|_r + |g(x,v_n)|_r\Big).
			\end{eqnarray*}
			Therefore, 
			\begin{eqnarray}
				\|Q(u_n,v_n)\| -\epsilon &\le& K \Big(|u_n|_{\mu}^{\mu/2} + |v_n|_{\mu}^{\mu/2} + k_2 \Big), \\
				\|P(u_n,v_n)\| -\epsilon &\le& K \Big(|u_n|_{\mu}^{\mu/2} + |v_n|_{\mu}^{\mu/2} + k_2 \Big),
			\end{eqnarray}
			with $0<k_2:= |f(x,u_n)|_r + |g(x,v_n)|_r<\infty$.\\
		The preceding inequalities imply that 
			\begin{equation*}
				\|(u_n,v_n)\| -2 \epsilon \le k_3 +k_4 \Big(|u_n|_{\mu}^{\mu/2} + |v_n|_{\mu}^{\mu/2} \Big),
			\end{equation*}
			for some positives constants $k_3$ and $k_4$. \\
			Moreover, inequality $(\ref{eq 16})$ implies that 
			\begin{eqnarray*}
				|u_n|_\mu &\le&  k_5 + \|(u_n,v_n)\|^{\frac{1}{\mu}}, \\
				|v_n|_\mu &\le&  k_6 + \|(u_n,v_n)\|^{\frac{1}{\mu}},
			\end{eqnarray*}
			for some positives constants $k_5$ and $k_6$. \\
			It follows that
			\begin{equation*}
				\|(u_n,v_n)\| -2 \epsilon \le D_1 \|(u_n,v_n)\|^{\frac{1}{2}} + D_2,
			\end{equation*} 
			for some positives constants $D_1, D_2$.\\ 
			Hence $(u_n,v_n)$ is bounded in $X$. \\
		 Since $(u_n,v_n)\subset X$ is bounded, $(u_n,v_n)$ possesses a subsequence (we call it again $(u_n,v_n)$) which converges weakly. That is, there exists $(u,v) \in X$ such that $(u_n,v_n) \rightharpoonup (u,v)$. We have
			
			\begin{align*}
				\langle J'(u_n,v_n)- J'(u,v), (u_n-u,0 )\rangle &= \|u_n-u\|^2 + \langle (u_n-u,0), (v_n-v -u_n +u,0)\rangle \\
				-&\lambda \int_{\Omega} (u_n-u)^2dx - \int_{\Omega} \Big(f(x,u_n) -f(x,u)\Big)(u_n-u) dx.
			\end{align*}
			By the Rellich-Kondrachov embedding theorem, $u_n \rightarrow u$ in $L^p(\Omega)$ and $v_n \rightarrow v$ in $L^p(\Omega)$. The Hölder inequality implies that
			\begin{eqnarray*}
				\Big|\int_{\Omega} (u_n-u)^2dx\Big| &\le& |u_n-u|_2|u_n-u|_2\quad \rightarrow 0, \quad n\rightarrow \infty,\\
				\Big|\int_{\Omega} \Big(f(x,u_n) -f(x,u)\Big)(u_n-u) dx\Big|	&\le& |f(x,u_n)-f(x,u)|_r|u_n-u|_p \quad \rightarrow 0, \quad n \rightarrow \infty.
			\end{eqnarray*}
			Since $(u_n,v_n)\rightharpoonup (u,v)$ and $J'(u_n, v_n) \rightarrow 0$, then
			\begin{eqnarray*}
				\langle J'(u_n,v_n)- J'(u,v), (u_n-u,0 )\rangle \quad \rightarrow 0, \; n\rightarrow \infty, \\
				\langle (u_n-u,0), (v_n-v -u_n +u,0)\rangle \; \rightarrow 0, \; n \rightarrow \infty.
			\end{eqnarray*}
			Thus, \[\|u_n-u\|^2 \rightarrow  0, \; n\rightarrow \infty.\]
			By the same way we show that $\|v_n-v\|^2 \rightarrow  0, \; n\rightarrow \infty$. We deduce that $J$ satisfies the Palais-Smale condition. The proof of  Lemma $\ref{lemme 4.3}$ is complete.
		\end{proof}

	The next two results show that the functional $J$ given in $(\star)$ satisfies the geometric assumptions $(\mathcal{G})$ of Theorem $\ref{theorem 3.1}$.
		
		\begin{lemma}
			\label{lemme 4.4}
			There exists $r>0$ such that $\underset{(u,u)\in N}{\inf}\;J(u,u) >0$, where $N$ is given by $(\ref{eq 9})$.
		\end{lemma}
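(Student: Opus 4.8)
The plan is to restrict $J$ to $N\subset Z$ and show that, once the radius $r$ is small, it stays bounded below by a positive constant. Every $w\in N$ has the form $w=(u,u)$ with $u\in H^1_0(\Omega)$, and since $Qw=w$ and $Pw=0$ we have $\|w\|^2=2\|u\|^2$ and $\tfrac12\|Qw\|^2=\|u\|^2=\int_\Omega|\nabla u|^2\,dx$. Substituting $v=u$ into $(\star)$ gives
\begin{equation*}
J(u,u) = \|u\|^2 - \frac{\lambda+\delta}{2}\int_\Omega u^2\,dx - \int_\Omega \big(F(x,u)+G(x,u)\big)\,dx ,
\end{equation*}
so it suffices to bound the two lower-order integrals from above in terms of $\|u\|$.

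First I would record the standard subquadratic bound on the primitives. Fix $\epsilon>0$. By $(H_2)$ there is $\eta>0$ with $|f(x,t)|,|g(x,t)|\le \epsilon|t|$ for $|t|\le\eta$, whence $|F(x,t)|,|G(x,t)|\le\frac{\epsilon}{2}t^2$ there; for $|t|\ge\eta$ the growth bound $(H_1)$ gives $|F(x,t)|,|G(x,t)|\le C_\epsilon|t|^p$. Combining the two regimes yields a constant $C_\epsilon>0$ with
\begin{equation*}
|F(x,t)|+|G(x,t)| \le \epsilon\, t^2 + C_\epsilon |t|^p \qquad (x\in\overline{\Omega},\ t\in\mathbb{R}),
\end{equation*}
and therefore $\int_\Omega \big(F(x,u)+G(x,u)\big)\,dx \le \epsilon\,|u|_2^2 + C_\epsilon |u|_p^p$.

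Next I would insert the Poincar\'e inequality $|u|_2^2\le\lambda_1^{-1}\|u\|^2$ (with $\lambda_1$ the first Dirichlet eigenvalue of $-\Delta$ on $\Omega$) and the Sobolev embedding $|u|_p\le S_p\|u\|$, valid because $p<2^*$. This produces
\begin{equation*}
J(u,u) \ge \Big(1 - \frac{\lambda+\delta}{2\lambda_1} - \frac{\epsilon}{\lambda_1}\Big)\|u\|^2 - C_\epsilon S_p^p\,\|u\|^p .
\end{equation*}
Choosing $\epsilon$ small enough that $\alpha:=1-\frac{\lambda+\delta}{2\lambda_1}-\frac{\epsilon}{\lambda_1}>0$ and writing $\beta:=C_\epsilon S_p^p$, every $w=(u,u)\in N$ satisfies $\|u\|=r/\sqrt2$, so $J(w)\ge \alpha\,(r^2/2)-\beta\,(r/\sqrt2)^p$. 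Since $p>2$, the right-hand side is strictly positive for all $r$ in some interval $(0,r_0)$; fixing such an $r$ gives $\inf_N J>0$, the bound being uniform over $N$ because it depends only on $\|u\|=r/\sqrt2$.

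The main obstacle is securing positivity of the leading coefficient $\alpha$: this requires the quadratic form $u\mapsto\|u\|^2-\frac{\lambda+\delta}{2}|u|_2^2$ to be positive definite on $H^1_0(\Omega)$, equivalently $\lambda+\delta<2\lambda_1$. Once that spectral condition is in force, the choice of $\epsilon$ and then of $r$ is routine; without it the linking geometry on $Z$ would fail and the lemma could not hold as stated.
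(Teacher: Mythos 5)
Your proof is correct and follows essentially the same route as the paper's: bound the primitives by $\epsilon t^2 + C_\epsilon|t|^p$ using $(H_1)$--$(H_2)$, convert the lower-order integrals into powers of $\|u\|$ via embeddings, and choose $r$ small using $p>2$. The one substantive difference lies in how the fixed quadratic term $\frac{\lambda+\delta}{2}|u|_2^2$ is absorbed, and here your treatment is sharper than the paper's. You control it by the Poincar\'e inequality with the first Dirichlet eigenvalue $\lambda_1$ and state openly that positivity of the leading coefficient requires $\lambda+\delta<2\lambda_1$. The paper instead lumps this term into a generic embedding constant $\bar{c_0}$ and then ``chooses'' $\epsilon>0$ so that $(\kappa+\epsilon)\bar{c_0}=\tfrac12$ with $\kappa=\frac{\lambda+\delta}{2}$; such a choice exists only when $\kappa\bar{c_0}<\tfrac12$, i.e.\ under a smallness restriction on $\lambda+\delta$ of exactly the kind you make explicit. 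Since the hypotheses $(H_1)$--$(H_3)$ allow arbitrary real $\lambda,\delta$, your closing observation is correct and exposes a genuine hidden gap in the paper's own argument: for instance, if $F,G\ge 0$ and $\lambda+\delta>2\lambda_1$, then taking $u$ along the first eigenfunction gives
\begin{equation*}
J(u,u)\le \Big(1-\frac{\lambda+\delta}{2\lambda_1}\Big)\|u\|^2<0
\end{equation*}
on every sphere, so no $r$ can work and some spectral condition is indispensable. A minor point also in your favor: on $N$ one has $\|(u,u)\|=r$, hence $\|u\|=r/\sqrt{2}$, which you track correctly while the paper conflates the two norms and writes $\|u\|=r$.
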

		\begin{proof}
			Assumptions $(H_1)$ and $(H_2)$ imply that (see \cite{Wi}) for every $\epsilon>0$, there exists $c_\epsilon>0$ such that 
			\begin{equation}
				\label{eq 19}
		 |F(x,t)|, |G(x,t)|\le \dfrac{\epsilon}{2} |t|^2 + c_\epsilon |t|^p,\;\; \text{for all}\;\;t \in \mathbb{R}.
			\end{equation}
		Thus, on $N$, we have
			\begin{eqnarray*}
				J(u,u) &=& \dfrac{\|Q(u,u)\|^2}{2} - \varphi(u,u)\\
				&=& \|u\|^2 -\varphi(u,u),
			\end{eqnarray*}
			where
			\begin{equation*} 
				\varphi(u,u) := \int_{\Omega} \Bigg(\dfrac{\lambda u^2}{2} +\dfrac{\delta u^2}{2}+ F(x,u) + G(x,u)\Bigg) \; dx.
			\end{equation*}
		The inequality $(\ref{eq 19})$ implies that, for every $\epsilon >0$, there is $c_\epsilon>0$ such that
			\begin{equation*}
				J(u,u) \ge \|u\|^2 - (\kappa +\epsilon) |u|_{2}^{2} -2c_\epsilon |u|_{p}^p.
			\end{equation*}
			where $\kappa := \dfrac{\lambda}{2} +\dfrac{\delta}{2}$.\\
			From the Sobolev embedding theorem, namely $ H_{0}^1 (\Omega) \hookrightarrow{} L^p(\Omega), 1<p\le2^* \; ( $that is$,\; $there exists$\: c_0 >0\; $such that$\; |u|_p \le c_0 \|u\|)$, we obtain
			\begin{equation*}
				J(u,u) \ge \Big(1- (\kappa + \epsilon) \bar{c_0}\Big) \|u\|^2 - 2c_\epsilon \bar{c_0} \|u\|^p,
			\end{equation*} 
			for some positive constant $\bar{c_0}$. \\
			 Choosing $(\kappa + \epsilon)\bar{c_0} =\dfrac{1}{2}$, we have 
			\begin{equation*}
J(u,u) \ge \dfrac{1}{2} \|u\|^2 - 2c_\epsilon \bar{c_0} \|u\|^p.
			\end{equation*}
			We can choose $r>0$ sufficiently small, such that 
			\begin{equation*}
		J(u,u) >0, \;\text{whenever} \; \|u\|=r.
			\end{equation*} 
		\end{proof}
		\begin{lemma}
			\label{lemma 4.5}
			Let $(z,z)\in Z$ such that $\|(z,z)\| =r$. There exists $\rho>r>0$ such that $\underset{\partial M}{\sup}\;J \le0$, where $\partial M$ is given by $(\ref{eq 8})$.
		\end{lemma}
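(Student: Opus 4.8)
The strategy is to verify the geometric condition $(\mathcal{G})$ by estimating $J$ separately on the two pieces making up $M_0=\partial M$ in $(\ref{eq 8})$: the flat face $Y\cap\{\,\|\cdot\|\le\rho\,\}$ (where $\lambda_0=0$) and the spherical cap $\{\,w:\|w\|=\rho,\ \lambda_0\ge0\,\}$. I would start from the indefinite splitting of $J$ and parametrize a generic point of $M$ as $w=(u,v)=y+\lambda_0(z,z)$ with $y=(-a,a)\in Y$, $\lambda_0\ge 0$ and $a\in H^1_0(\Omega)$, so that $u=-a+\lambda_0 z$, $v=a+\lambda_0 z$, $\|Q(u,v)\|^2=\lambda_0^2$ and $\|P(u,v)\|^2=2\|a\|^2$. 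This gives
\begin{equation*}
J(w)=\frac{\lambda_0^2}{2}-\|a\|^2-\frac{\lambda}{2}|u|_2^2-\frac{\delta}{2}|v|_2^2-\int_\Omega\big(F(x,u)+G(x,v)\big)\,dx,
\end{equation*}
and it remains to render the right-hand side nonpositive on each piece for a suitable $\rho>r>0$.

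On the flat face $\lambda_0=0$, so $u=-a$, $v=a$ and
\begin{equation*}
J(y)=-\Big(\|a\|^2+\frac{\lambda+\delta}{2}|a|_2^2\Big)-\int_\Omega\big(F(x,-a)+G(x,a)\big)\,dx.
\end{equation*}
The first bracket is the indefinite quadratic form restricted to $Y$; I would use that it is nonnegative, which is precisely the structural requirement that $Y$ be the nonpositive subspace of the quadratic part and, by the Poincar\'e inequality, holds as soon as $\lambda+\delta$ is not too negative. If one also knows $F,G\ge0$ (the usual sign accompanying an Ambrosetti--Rabinowitz condition), then $J(y)\le0$ at once. Without that sign I would split according to $\|a\|$: for $\|a\|$ small, inequality $(\ref{eq 19})$ (from $(H_1)$--$(H_2)$) bounds $\int_\Omega F(x,-a)+G(x,a)$ below by $-\epsilon|a|_2^2-2c_\epsilon|a|_p^p$, which is absorbed by the negative quadratic term, while for $\|a\|$ large Lemma $\ref{lemme 4.2}$ gives $F,G\ge c_1(|\cdot|^\mu-1)$, so the term $-\|a\|^2$ drives $J(y)\to-\infty$; the delicate range is the intermediate one, where the full strength of the negative-definiteness is needed.

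On the spherical cap $2\|a\|^2+\lambda_0^2=\rho^2$ the difficulty is that $\|w\|=\rho$ large does not force $|u|_\mu,|v|_\mu$ to be large, so $\varphi$ need not be large (concentration). To bypass this I would bound $\varphi$ from below by Lemma $\ref{lemme 4.2}$, namely by $c_1(|u|_\mu^\mu+|v|_\mu^\mu)$ up to a constant and the quadratic terms, absorb $\tfrac{\lambda}{2}|u|_2^2$ and $\tfrac{\delta}{2}|v|_2^2$ through $|u|_2\le C|u|_\mu$ (valid since $\Omega$ is bounded and $\mu>2$), and control the only positive contribution $\tfrac{\lambda_0^2}{2}$ by means of the identity $u+v=2\lambda_0 z$: from $2\lambda_0|z|_\mu=|u+v|_\mu\le|u|_\mu+|v|_\mu$ one gets $\lambda_0^2\le C''(|u|_\mu^2+|v|_\mu^2)$, which is dominated by $-c_1(|u|_\mu^\mu+|v|_\mu^\mu)$ because $\mu>2$. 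The outcome is an estimate of the shape
\begin{equation*}
J(w)\le C\big(|u|_\mu^2+|v|_\mu^2\big)-c_1\big(|u|_\mu^\mu+|v|_\mu^\mu\big)+C'-\|a\|^2,
\end{equation*}
whose right-hand side tends to $-\infty$ uniformly as $\rho\to\infty$: if $\|a\|\ge\rho/2$ the term $-\|a\|^2$ dominates, while if $\|a\|<\rho/2$ then $\lambda_0>\rho/\sqrt2$ forces $|u|_\mu+|v|_\mu$ to be large and the superquadratic term dominates. Choosing $\rho$ large enough then yields $J\le0$ on the cap, and combined with the flat-face estimate this proves $\sup_{M_0}J\le0$ (indeed equality, since $J(0,0)=0$).

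I expect the flat face to be the genuine obstacle. In the classical linking theorem $\dim Y<\infty$, so on the bounded set $Y\cap\{\|\cdot\|\le\rho\}$ compactness delivers the bound automatically; here $Y$ is infinite-dimensional, no such compactness is available, and one is forced to exploit the exact negative-definiteness of the indefinite form on $Y$ together with the sign and growth of $F,G$. On the cap the only subtlety is the concentration phenomenon, which the identity $u+v=2\lambda_0 z$ neutralizes by tying the height $\lambda_0$ to the $L^\mu$-norms.
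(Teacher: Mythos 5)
Your proposal is correct under the same tacit assumptions the paper itself relies on ($F,G\ge 0$ and a sign restriction on $\lambda,\delta$), and its architecture matches the paper's proof: treat separately the face $\lambda_0=0$ and the sphere $\|w\|=\rho$ of $\partial M$, obtain $J\le 0$ on $Y$ from positivity of $\varphi$, and on the sphere combine the superquadratic lower bound $F,G\ge c_1(|t|^\mu-1)$ of Lemma \ref{lemme 4.2} with a control of the height $\lambda_0$ by $L^\mu$-quantities, so that $J\to-\infty$ as $\rho\to\infty$. The one genuine difference lies in how that control is obtained. The paper proceeds abstractly: it invokes a continuous projection of $W$ (the closure of $Y\oplus\mathbb{R}(z,z)$ in $L^{\mu}\times L^{\mu}$) onto $\mathbb{R}(z,z)$ and equivalence of norms in finite dimensions, yielding $c_3|\lambda_0 z|_\mu\le|-v+\lambda_0 z|_\mu$ and $c_4|\lambda_0 z|_\mu\le|v+\lambda_0 z|_\mu$. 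You proceed explicitly from the identity $u+v=2\lambda_0 z$, i.e.\ $2\lambda_0|z|_\mu=|u+v|_\mu\le|u|_\mu+|v|_\mu$. These encode the same fact --- your identity is exactly the paper's projection $(u,v)\mapsto\bigl(\tfrac{u+v}{2},\tfrac{u+v}{2}\bigr)$ written out --- but your version is more elementary and actually settles a point the paper leaves unargued: for the paper's projection along the $L^\mu$-closure of $Y$ to exist and be bounded, one needs $(z,z)$ to lie outside that closure, which is precisely what your triangle-inequality computation establishes. You are also more careful in absorbing $-\tfrac{\lambda}{2}|u|_2^2-\tfrac{\delta}{2}|v|_2^2$ via $|u|_2\le C|u|_\mu$ and Young's inequality, whereas the paper simply discards these terms (tacitly assuming $\lambda,\delta\ge 0$). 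The only soft spot is your flat-face fallback without the sign $F,G\ge 0$: you admit the intermediate range of $\|a\|$ is unresolved; but since your primary branch coincides with what the paper asserts when it writes $J(-v,v)=-\|v\|^2-\varphi(-v,v)\le 0$ ``by $(H_3)$'' (an assertion requiring the same tacit positivity), this is not a gap relative to the paper's own proof.
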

		\begin{proof}
			By assumption $(H_3)$, we have on $Y$
			\begin{equation*}
				J(-v,v) = -\dfrac{\|P(-v,v)\|^2}{2} - \varphi(-v,v) = -\|v\|^2 -\varphi(-v,v) \le 0.
			\end{equation*}
			By Lemma $\ref{lemme 4.2}$, we have
			\begin{align*}
				J((-v,v)+\lambda_0(z,z))&\le -\|v\|^2 -\dfrac{\lambda}{2}|-v+\lambda_0 z|_{2}^2  -\dfrac{\delta}{2}|v+\lambda_0 z|_{2}^2 + \dfrac{\lambda_{0}^2}{2} \\
				& + c_1 \Big(2|\Omega|- |-v+\lambda_0 z|_{\mu}^{\mu} - |v+\lambda_0 z|_{\mu}^{\mu}\Big) \\
				&\le -\|v\|^2 +\dfrac{\lambda_{0}^2}{2}+ 2c_1 |\Omega|- c_1|-v+\lambda_0 z|_{\mu}^{\mu} - c_1|v+\lambda_0 z|_{\mu}^{\mu}.
			\end{align*}
			Let $W$ be the closure of $Y\oplus \mathbb{R}(z,z)$ in $L^{\mu}(\Omega)\times L^{\mu}(\Omega) $. Since the Sobolev space $H_{0}^1 (\Omega) \times H_{0}^1 (\Omega) $ embeds continuously into $L^{\mu}(\Omega)\times L^{\mu}(\Omega)$, then there exists a continuous projection from $W$ onto $\mathbb{R}(z,z)$. Moreover, since all norms are equivalent in a finite-dimensional vector space , there exist some constants $c_3>0$ and $c_4>0$ such that  $c_3|\lambda_{0} z|_\mu \le |-v+\lambda_{0} z|_\mu$, $c_4|\lambda_{0} z|_\mu \le |v+\lambda_{0} z|_\mu$. \\
			Therefore we have
			\begin{eqnarray*}
				J\Big((-v,v)+\lambda_0(z,z)\Big) &\le& -\|v\|^2 +\dfrac{\lambda_{0}^2}{2}+ 2c_1 |\Omega| - c_1(c_3^{\mu} + c_4^{\mu})|\lambda_{0}z|_{\mu}^{\mu},\\
				&\le& -\|v\|^2 +\dfrac{\lambda_{0}^2}{2}+ 2c_1 |\Omega| - c_5\lambda_{0}^{\mu},
			\end{eqnarray*}
		for some constant $c_5>0$.\\
			Since $\mu >2$, it follows that for $w \in \partial M$
			\begin{equation*}
				J(w) \rightarrow -\infty \;\text{whenever}\;\|w\| \rightarrow \infty,
			\end{equation*}
			and so, taking $\rho>r$ large, we get $\underset{\partial M}{\sup}\;J \le0$.
		\end{proof}

        The following result shows that the functional $J$ given in $(\star)$ satisfies the assumption $(A)$ of Theorem $\ref{theorem 3.1}$.
        \begin{lemma}
        \label{lem 4.6}
         The functional $J$ is $\tau-$upper semicontinuous and $\nabla J$ is weakly sequentially continuous.
        \end{lemma}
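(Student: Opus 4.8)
The plan is to verify the two conditions separately, since Lemma~\ref{lem 4.6} asserts that $J$ is $\tau$-upper semicontinuous and that $\nabla J$ is weakly sequentially continuous. For the $\tau$-upper semicontinuity, I would show that for each $\beta\in\mathbb{R}$ the superlevel set $J_\beta:=\{(u,v)\in X\mid J(u,v)\ge\beta\}$ is $\tau$-closed. The natural strategy is sequential: take a bounded sequence $(u_n,v_n)\in J_\beta$ with $(u_n,v_n)\overset{\tau}{\to}(u,v)$ and prove $J(u,v)\ge\beta$. By Remark~\ref{remark 2}, $\tau$-convergence of a bounded sequence means $P(u_n,v_n)\rightharpoonup P(u,v)$ and $Q(u_n,v_n)\to Q(u,v)$. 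The quadratic part $\tfrac12\|Q(u,v)\|^2-\tfrac12\|P(u,v)\|^2$ then behaves well: the $Q$-term passes to the limit by strong convergence, while the $P$-term, entering with a minus sign, is upper semicontinuous under weak convergence because $\|P(u,v)\|\le\liminf\|P(u_n,v_n)\|$ gives $-\tfrac12\|P(u,v)\|^2\ge\limsup\bigl(-\tfrac12\|P(u_n,v_n)\|^2\bigr)$.

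First I would reduce the claim to the sequential formulation, which is legitimate here because $\tau$ is generated by the norm $|\cdot|_\tau$ and is therefore metrizable. Next I would treat the lower-order term $\varphi(u,v)=\int_\Omega \tfrac{\lambda u^2}{2}+\tfrac{\delta v^2}{2}+F(x,u)+G(x,v)\,dx$. The point is that by the compact Rellich--Kondrachov embedding $H^1_0(\Omega)\hookrightarrow L^p(\Omega)$ for $p<2^*$, a bounded sequence converging weakly in $X$ converges strongly in $L^p(\Omega)\times L^p(\Omega)$; combined with the growth bound in $(H_1)$ and Theorem~A.2 of \cite{Wi}, this yields $\int_\Omega F(x,u_n)\,dx\to\int_\Omega F(x,u)\,dx$ and similarly for $G$, and the $L^2$ terms converge as well. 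Hence $\varphi$ is continuous along such sequences, so $-\varphi$ contributes no obstruction. Assembling the three pieces gives $J(u,v)\ge\limsup J(u_n,v_n)\ge\beta$, establishing that $J_\beta$ is $\tau$-closed.

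For the weak sequential continuity of $\nabla J$, I would fix $(u_n,v_n)\rightharpoonup(u,v)$ in $X$ and test $\nabla J(u_n,v_n)$ against an arbitrary $(w,z)\in X$, using the formula for $\langle J'(u,v),(w,z)\rangle$ from Lemma~\ref{lemme 4.1}. The linear gradient terms $\int_\Omega \nabla u_n\nabla z+\nabla v_n\nabla w\,dx$ converge by the very definition of weak convergence in $X$. The remaining terms $\int_\Omega \lambda u_n w+\delta v_n z\,dx$ and $\int_\Omega f(x,u_n)w+g(x,v_n)z\,dx$ are handled exactly as in the continuity argument of Lemma~\ref{lemme 4.1}: the compact embedding forces $u_n\to u$ and $v_n\to v$ strongly in $L^p$, the Nemytskii maps $f(x,\cdot),g(x,\cdot)$ are then continuous $L^p\to L^r$ with $r=p/(p-1)$, and Hölder's inequality finishes the estimate as in \eqref{eq 15}. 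Thus $\langle \nabla J(u_n,v_n),(w,z)\rangle\to\langle\nabla J(u,v),(w,z)\rangle$ for every $(w,z)$, which is precisely weak convergence $\nabla J(u_n,v_n)\rightharpoonup\nabla J(u,v)$.

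I expect the main obstacle to be the $\tau$-upper semicontinuity rather than the weak continuity of the gradient. The delicate point is the interplay between the indefinite quadratic form and the topology: one must use strong convergence on $Z$ but only weak convergence on $Y$, and it is essential that the negative-definite part $-\tfrac12\|P\cdot\|^2$ is the one attached to the weakly-convergent component, so that weak lower semicontinuity of the norm works in our favour to give upper semicontinuity of $-\tfrac12\|P\cdot\|^2$. A subtle auxiliary issue is justifying the reduction to bounded sequences; since $\tau$-convergent sequences need not be bounded a priori, I would either argue directly from the metrizability of $\tau$ together with the fact that $|\cdot|_\tau$-convergence controls $\|Q\cdot\|$ and $|P\cdot|_\sigma$, or invoke Remark~\ref{remark 2} in the form already used throughout the paper, noting that the relevant sequences arising in the minimax scheme are bounded by construction.
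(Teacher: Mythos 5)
Your strategy for both halves of the lemma is essentially the paper's own: for the $\tau$-upper semicontinuity you pass to sequences (legitimate, since $\tau$ is a norm topology), split $J$ into its quadratic part and $\varphi$, use strong convergence of the $Q$-components together with weak lower semicontinuity of $\|P\,\cdot\|^2$, and handle $\varphi$ through the compact embedding; for $\nabla J$ you test against a fixed $(w,z)$, treat the gradient terms by weak convergence and the remaining terms by Rellich--Kondrachov, Theorem A.2 of \cite{Wi} and H\"older. Two differences are minor and both defensible: you obtain full continuity of $\varphi$ along the relevant sequences via the Nemytskii operators, whereas the paper only needs lower semicontinuity and gets it from Fatou's lemma (using $F,G\ge 0$); and your handling of the gradient terms by weak convergence is actually the correct reading, since the first term of the estimate $(\ref{eq 15})$ requires strong convergence and cannot be cited verbatim under mere weak convergence.

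There is, however, a genuine gap exactly at the point you flag: the reduction to \emph{bounded} sequences, and neither of your proposed fixes closes it. Metrizability of $\tau$ only converts $\tau$-closedness into sequential $\tau$-closedness; it gives no bound on $\|P(u_n,v_n)\|$, because $|\cdot|_\sigma$-convergence of the $P$-components says nothing about their norms. And restricting attention to ``sequences arising in the minimax scheme'' is not admissible: $\tau$-upper semicontinuity, as defined in Section 2 and required in hypothesis (A) of Theorem \ref{theorem 3.1}, demands that \emph{every} superlevel set $J_\beta$ be $\tau$-closed, so the implication must hold for arbitrary $\tau$-convergent sequences in $J_\beta$; one cannot weaken the property being proved. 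The missing step must instead use the superlevel constraint itself. By Lemma \ref{lemme 4.2} and $\mu>2$, the functions $t\mapsto \frac{\lambda t^2}{2}+F(x,t)$ and $t\mapsto \frac{\delta t^2}{2}+G(x,t)$ are bounded below pointwise (the $|t|^\mu$ growth absorbs any negative quadratic term), so $\varphi\ge -C$ on all of $X$ since $|\Omega|<\infty$. Moreover $\|Q(u_n,v_n)-Q(u,v)\|\le |(u_n,v_n)-(u,v)|_\tau \to 0$, so the $Q$-components are bounded, and then $J(u_n,v_n)\ge\beta$ yields
\begin{equation*}
\frac{1}{2}\|P(u_n,v_n)\|^2 \;=\; \frac{1}{2}\|Q(u_n,v_n)\|^2-\varphi(u_n,v_n)-J(u_n,v_n)\;\le\; \frac{1}{2}\|Q(u_n,v_n)\|^2+C-\beta,
\end{equation*}
which is bounded; hence the whole sequence is bounded, Remark \ref{remark 2} applies, and the rest of your argument goes through. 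This is also the (unstated) justification behind the paper's own terse assertion ``Then, $(u_n)$ is bounded.''
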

        \begin{proof}
            1. Let us show that \textbf{ $J$ is $\tau-$upper semicontinuous}.
	For every $c\in \mathbb{R}$, let show that the set \[ \Big \{(u,v)\in X\,|\, J(u,v)\ge c \Big\}\quad \text{is}\quad\tau-closed.\]
	Let $(u_n,v_n) \subset X$ such that $(u_n,v_n)\overset{\tau}{\rightarrow} (u,v)$ in $X$ and $c\le J(u_n,v_n)$. Then, $(u_n, v_n)$ is bounded. Thus, by $(\ref{eq 4})$, we derive that $(u_n,v_n) \rightharpoonup (u,v)$ in $X$. By the Rellich-Kondrachov embedding theorem, up to a subsequence we have 
	\begin{eqnarray*}
		u_n &\to& u \quad \text{in}\quad L^p(\Omega),\\
        v_n &\to& v \quad \text{in}\quad L^p(\Omega), \\
        u_n(x) &\to& u(x), \;v_n(x) \to v(x), \quad \text{almost everywhere} ,\quad \text{in}\quad \Omega,\\
        F(x,u_n(x)) &\to& F(x,u(x)), \quad \text{almost everywhere} ,\quad \text{in}\quad \Omega,\\
        G(x,u_n(x)) &\to& G(x,u(x)), \quad \text{almost everywhere} ,\quad \text{in}\quad \Omega.
	\end{eqnarray*}
Since $F(x,u_n)\ge 0$ and $G(x,v_{n})\ge 0$, then by the Fatou Lemma, we have 
	\begin{equation*}
		\int_\Omega F(x,u) dx= \int_ \Omega\underset{n\rightarrow \infty}{\underline{\lim}}F(x,u_{n})dx  \le \underset{n\rightarrow \infty}{\underline{\lim}} \int_\Omega F(x,u_{n})dx.
	\end{equation*}
    In the same way, we have 
    \begin{equation*}
		\int_\Omega G(x,v) dx= \int_ \Omega\underset{n\rightarrow \infty}{\underline{\lim}}G(x,v_{n})dx  \le \underset{n\rightarrow \infty}{\underline{\lim}} \int_\Omega G(x,v_{n})dx.
	\end{equation*}
	Since $\|\cdot\|$ is weak lower semi-continuous, we have 
	\begin{equation*}
		\|P(u,v)\|^2 \le \underset{k\rightarrow \infty}{\underline{\lim}} \|P(u_n,v_n)\|^2.
        \end{equation*}
       	Moreover, since $\|Q(u_n,v_n)\|^2 \to \|Q(u,v)\|^2$, we have 
		\begin{equation*}
		 \underset{k\rightarrow \infty}{\overline{\lim}} \Big( -\|Q(u_n,v_n)\|^2\Big)= 
			-\|Q(u,v)\|^2 = 	\underset{k\rightarrow \infty}{\underline{\lim}} (-\|Q(u_n,v_n)\|^2).
		\end{equation*}
	Hence, 
	\begin{eqnarray*}
		-J(u,v) &=& \dfrac{1}{2}\Big( \|P(u,v)\|^2-\|Q(u,v)\|^2 \Big) +\varphi (u,v)\\ &\le& \underset{k\rightarrow \infty}{\underline{\lim}} \Bigg(\dfrac{1}{2}\Big( \|P(u_n,v_n)\|^2-\|Q(u_n,v_n)\|^2 \Big) +\varphi (u_n,v_n)  \Bigg)\\
		&=&  \underset{k\rightarrow \infty}{\underline{\lim}} (-J(u_n,v_n) )\\
		&=& - \underset{k\rightarrow \infty}{\overline{\lim}} J(u_n,v_n)\\
		&\le& -c.
	\end{eqnarray*}
    2. Now, let us show that $\nabla J$ is weakly sequentially continuous. Let $(u_n,v_n) \subset X$ such that $(u_n,v_n) \rightharpoonup (u,v)$ in $X$. We have to show that \[\nabla J(u_n,v_n) \rightharpoonup \nabla J(u,v).\]
    For every $(w,z)\in X$, we have established that (see ($\ref{eq 15}$))
     \begin{equation*}
         \Big|\langle J'(u_n,v_n)- J'(u,v), (w,z) \rangle \Big| \quad \rightarrow 0, \quad n \rightarrow \infty.
     \end{equation*}
     Therefore, for every $(w,z) \in X$,
     \begin{equation*}
        \langle \nabla J(u_n,v_n), (w,z) \rangle \to \langle \nabla J(u,v), (w,z) \rangle.
     \end{equation*}
     Hence, $\nabla J(u_n,v_n) \rightharpoonup \nabla J(u,v)$.
        \end{proof}
	
	\subsection*{Proof of Theorem $\ref{thm 4.1}$}
By Lemma $\ref{lemme 4.1}$, Lemma $\ref{lemme 4.3}$ and Lemma \ref{lem 4.6}, the functional $J \in \mathcal{C}^1(X, \mathbb{R})$, satisfies $(PS)_c$ condition for all $c \in \mathbb{R}$, $J$ is $\tau-$upper semicontinuous and $\nabla J$ is weakly sequentially continuous.  \\
By Lemma $\ref{lemme 4.4}$ and Lemma $\ref{lemma 4.5}$, $J(u,v)$ satisfies the geometric assumptions $(\mathcal{G})$ of Theorem $\ref{theorem 3.1}$. So, invoking Theorem $\ref{theorem 3.1}$, there exists $(u,v)\in X$ such that $J'(u,v) =0$ and $J(u,v) =c$. The existence of a critical point of $J$, that is, a solution to problem $(\mathcal{P})$ is now established, finishing the proof. $\square$
	\section*{Conclusion}
In this work, we presented a natural generalization of the linking theorem to strongly indefinite functionals. Taking advantage of the general principle of minimax found in \cite{CS}, we gave a straightforward proof of our main result. The same ideas will be used in future work to generalize other critical point theorems for strongly indefinite functionals.

	\newpage

\end{document}